\documentclass[11pt]{amsart}

\usepackage[T1]{fontenc}
\usepackage[utf8]{inputenc}
\usepackage{lmodern}
\usepackage{microtype}
\usepackage[a4paper,margin=1.08in]{geometry}
\usepackage{amsmath,amssymb,amsthm,mathtools}
\usepackage{bm}
\usepackage{enumitem}
\usepackage{booktabs}
\usepackage{array}
\usepackage{aliascnt}
\usepackage{etoolbox}
\usepackage{placeins}
\usepackage{tikz}
\usetikzlibrary{decorations.markings, arrows.meta}
\usetikzlibrary{arrows.meta}
\usepackage{xcolor}
\usepackage{hyperref}
\usepackage[nameinlink,noabbrev]{cleveref}
\usepackage{graphicx}

\definecolor{linkblue}{RGB}{27,77,135}
\definecolor{wallred}{RGB}{165,48,48}
\definecolor{regionblue}{RGB}{49,102,166}
\definecolor{regiongold}{RGB}{184,126,27}
\definecolor{mygreen}{RGB}{50, 108, 98}
\hypersetup{
  colorlinks=true,
  linkcolor=linkblue,
  citecolor=linkblue,
  urlcolor=linkblue,
  pdftitle={Explicit Hamiltonian Classification in the F4(0) Toric Degeneration of CP2},
  pdfauthor={Han Lou and Shuo Zhang},
  pdfsubject={Lagrangian tori, toric degeneration, Hamiltonian isotopy}
}

\setlist[itemize]{leftmargin=2em,itemsep=0.25em,topsep=0.35em}
\setlist[enumerate]{leftmargin=2em,itemsep=0.35em,topsep=0.35em}
\allowdisplaybreaks
\numberwithin{equation}{section}

\newtheorem{theorem}{Theorem}[section]
\newtheorem*{maintheorem}{Main Theorem}
\newaliascnt{proposition}{theorem}
\newtheorem{proposition}[proposition]{Proposition}
\aliascntresetthe{proposition}
\newaliascnt{lemma}{theorem}
\newtheorem{lemma}[lemma]{Lemma}
\aliascntresetthe{lemma}
\newaliascnt{corollary}{theorem}
\newtheorem{corollary}[corollary]{Corollary}
\aliascntresetthe{corollary}
\newaliascnt{claim}{theorem}

\aliascntresetthe{claim}
\theoremstyle{definition}
\newaliascnt{definition}{theorem}
\newtheorem{definition}[definition]{Definition}
\aliascntresetthe{definition}
\newaliascnt{remark}{theorem}
\newtheorem{remark}[remark]{Remark}
\aliascntresetthe{remark}
\newtheorem*{remark*}{Remark}

\crefname{theorem}{theorem}{theorems}
\Crefname{theorem}{Theorem}{Theorems}
\crefname{proposition}{proposition}{propositions}
\Crefname{proposition}{Proposition}{Propositions}
\crefname{lemma}{lemma}{lemmas}
\Crefname{lemma}{Lemma}{Lemmas}
\crefname{corollary}{corollary}{corollaries}
\Crefname{corollary}{Corollary}{Corollaries}
\crefname{claim}{claim}{claims}
\Crefname{claim}{Claim}{Claims}
\crefname{definition}{definition}{definitions}
\Crefname{definition}{Definition}{Definitions}
\crefname{remark}{remark}{remarks}
\Crefname{remark}{Remark}{Remarks}

\newcommand{\CP}{\mathbb{CP}}
\newcommand{\R}{\mathbb{R}}

\newcommand{\Z}{\mathbb{Z}}

\newcommand{\Int}{\operatorname{Int}}

\newcommand{\im}{\operatorname{Im}}

\newcommand{\std}{\mathrm{std}}

\newcommand{\multiset}[1]{\{\!\{#1\}\!\}}

\AtEndEnvironment{theorem}{%
  \ifnum\value{section}=3\relax
    \label{thm:offwall}%
    \label{cor:offwall-triples}%
  \fi
}

\title[Explicit classification in the \(F_4(0)\) degeneration]
{Explicit Hamiltonian Classification in the
\(F_4(0)\) Toric Degeneration of \(\CP^2\)}
\author{Han Lou}
\address{Beijing International Center for Mathematical Research, Peking University}
\email{hlou423@gmail.com}
\author{Shuo Zhang}
\address{Morningside Center, Chinese Academy of Science}
\email{felix.zhang@berkeley.edu}
\subjclass[2020]{Primary 53D12; Secondary 53D20}
\keywords{Lagrangian torus, toric degeneration, Hamiltonian isotopy,
symplectic reduction, displacement-energy germ}
\date{}

\begin{document}

\begin{abstract}
We give an explicit coordinate description of the Hamiltonian isotopy
classes of the regular Lagrangian torus fibers of the smoothing
\(\widehat{F}_4(0)\) of the \(F_4(0)\) toric degeneration, expressed in
the explicit Oakley--Usher coordinates on
\(\CP^2(\sqrt2)\). For the wall fibers, they are not Hamiltonian isotopic
to standard toric fibers, and no two
distinct wall fibers are Hamiltonian isotopic.  For the off-wall
fibers, we find the standard toric fibers they are Hamiltonian isotopic to.
\end{abstract}
\maketitle

\tableofcontents

\section{Introduction}

Wu constructed an exotic monotone Lagrangian torus in \(\CP^2\) from a
smoothing of the singular toric surface \(F_4(0)\) similar to the degeneration used by
Fukaya--Oh--Ohta--Ono for \(S^2\times S^2\)
\cite{FOOO,WuExotic}.
Albers--Frauenfelder descended a torus in $T^*S^2$ to one in $T^*\mathbb{R}P^2$, which then includes into $\mathbb{CP}^2$ as a monotone torus \cite{AlbersFrauenfelder}. 
Chekanov--Schlenk constructed a twist torus in $B^4(\sqrt{2})$, which gives a torus in $\mathbb{CP}^2$ under the dense symplectic embedding $B^4(\sqrt{2})\hookrightarrow\mathbb{CP}^2$ \cite{ChekanovSchlenk}.
Biran--Cornea constructed a torus by the result of the Biran circle bundle construction using an equatorial circle in the quadric $Q_2(\sqrt{2})$ \cite{BiranCornea}.

Oakley and Usher gave a symplectomorphism from the smoothing
\(\widehat{F}_4(0)\) to \(\CP^2(\sqrt2)\) and identified Wu's monotone
fiber in their model
\cite[Proposition 3.3]{OakleyUsher}.  They also proved that this fiber is
Hamiltonian isotopic to the Albers--Frauenfelder torus,  Chekanov--Schlenk torus, and the Biran--Cornea torus
\cite[Theorem 1.2]{OakleyUsher}. Gadbled also proved in \cite{Gadbled} that Chekanov--Schlenk torus and Biran--Cornea torus are Hamiltonian isotopic.

Lou classified the regular fibers of the \(F_2(0)\) degeneration of
\(S^2\times S^2\) up to Hamiltonian isotopy \cite{Lou}. We consider the corresponding question
for every regular fiber of the smoothing of $F_4(0)$.

Throughout the paper, let $\mathbb{CP}^2(\sqrt{2})$ be the coisotropic reduction of the sphere of radius $\sqrt{2}$ in $\mathbb{C}^3$ with the Fubini-Study form $\omega_{\mathrm{FS}}$ such that $\int_{\mathbb{CP}^1}\omega_{\mathrm{FS}}=2\pi$. There is a toric structure on $\mathbb{CP}^2(\sqrt{2})$ with moment map 
\begin{align*}
    \mu_{\mathrm{std}}: \mathbb{CP}^2(\sqrt{2}) &\to \mathbb{R}^2\\
    [z_0: z_1: z_2] & \mapsto\left(\frac{1}{2}|z_1|^2, \frac{1}{2}|z_2|^2\right)
\end{align*}
The moment polytope $\Delta_{\mathrm{std}}$ is 
\begin{align*}
    \Delta_{\mathrm{std}}=\left\{(x, y)\in\mathbb{R}^2\mid x\ge 0, y\ge 0, x+y\le 1\right\}
\end{align*}
Given an interior point $(x, y)$ of $\Delta_{std}$, $\mu_{\mathrm{std}}^{-1}(x, y)$ is a Lagrangian torus which is called a standard toric fiber and denoted by $T(x, y)$.

For the construction of $\widehat{F}_4(0)$, as in \cite{WuExotic}, one begins with a symplectic toric orbifold that is denoted $F_4(0)$ and whose moment polytope is 
\begin{align*}
    \Delta_W=\left\{(x, y)\in \mathbb{R}^2\mid 0\le x\le 2, 0\le y\le \frac{1}{2}-\frac{1}{4}x\right\}
\end{align*}
with exactly one singular point sitting over the point $\left(0, \frac{1}{2}\right)\in\Delta_W$. Then by replacing a neighborhood of the singular point with a neighborhood of the zero section of the cotangent bundle $T^*\mathbb{R}P^2$, one obtains a manifold denoted $\widehat{F}_4(0)$ that is symplectomorphic to $\mathbb{CP}^2(\sqrt{2})$. The moment polytope of $\widehat{F}_4(0)$ is still $\Delta_W$ and we denote the moment map by 
\begin{align*}
    \mu_W: \widehat{F}_4(0)\to \Delta_W
\end{align*}
Then $\mu_W^{-1}\left(0, \frac{1}{2}\right)=\mathbb{R}P^2$. Let $(x, y)$ be an interior point of $\Delta_W$. Denote by $L(x, y):=\mu_W^{-1}(x, y)$, which is a Lagrangian torus in $\widehat{F}_4(0)$.

\begin{figure}[h]
    \centering
    \includegraphics[scale=0.7]{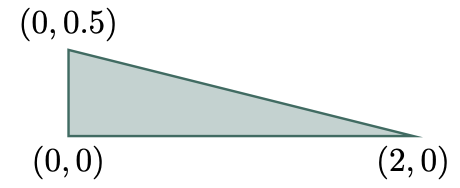}
    \caption{The moment polytope $\Delta_{W}$ of $\widehat{F}_4(0)$}
\end{figure}

We have the following result. 

\begin{maintheorem}\label{thm:main}
    For every \((x,y)\in\Int\Delta_W\), the following hold.
\begin{enumerate}
\item If \(x+2y<1\), then $L(x,y)$ is Hamiltonian isotopic to $T\left(y,1-x-y\right)$.

\item If \(x+2y>1\), then $L(x,y)$ is Hamiltonian isotopic to $T\left(x+3y-1,y\right)$.

\item If \(x+2y=1\), then \(L(x,y)\) is not Hamiltonian isotopic to any
standard toric fiber.
\end{enumerate}
\end{maintheorem}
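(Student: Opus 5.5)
Write \(\Phi\colon \widehat F_4(0)\to\CP^2(\sqrt2)\) for the Oakley--Usher symplectomorphism. We work directly in their explicit coordinates on \(\CP^2(\sqrt2)\).

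\textbf{Step 1: the toric charts away from the wall.} The mutation wall \(\{x+2y=1\}\) is the image of the ray from the singular vertex \((0,\tfrac12)\) along the eigendirection of its monodromy. It cuts \(\Delta_W\) into two chambers. On the preimage of each chamber, the torus action generated by \(\mu_W\) agrees, after composing with \(\Phi\), with a genuine Hamiltonian \(T^2\)-action on an open subset of \(\CP^2(\sqrt2)\). So the fibration \(\Phi\circ\mu_W\) coincides there with \(A_\pm\circ\mu_{\std}+c_\pm\) for some integral affine maps \(A_\pm\in GL_2(\Z)\ltimes\R^2\).

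I would determine these maps by matching edges and vertices, and by computing actions \(\int\lambda\) of the basic loops of \(L(x,y)\) in the Oakley--Usher coordinates.

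\begin{itemize}
\item Below the wall, the edges \(\{y=0\}\) and \(\{x=2\}\) should correspond to edges of \(\Delta_{\std}\). This should give the map \((x,y)\mapsto(y,1-x-y)\).
\item Above the wall, the edge \(y=\tfrac12-\tfrac14x\) and the region near the \(\R P^2\) vertex are involved. This should give \((x,y)\mapsto(x+3y-1,y)\).
\end{itemize}

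As a consistency check, the two formulas agree on the wall \(x+2y=1\): both give \((y,y)\), the diagonal, with \((\tfrac13,\tfrac13)\) as the monotone point. This agreement is exactly the expected piecewise-linear mutation.

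\textbf{Step 2: parts (1) and (2).} For an off-wall point, \(\Phi(L(x,y))\) is a fiber of a toric moment map on an open region of \(\CP^2(\sqrt2)\) whose action coordinates agree with \(\mu_{\std}\) up to the affine map. I then interpolate by a smooth family of Lagrangian tori \(L_t\) with constant flux, for instance by deforming the Hamiltonian \(T^2\)-action (or the chart) to the standard one through actions that all contain the fiber. Vanishing flux of this family gives a Hamiltonian isotopy to \(T(y,1-x-y)\), respectively \(T(x+3y-1,y)\). I would try to construct the interpolation explicitly through the Oakley--Usher coordinates, since the two torus actions differ only by a cut in a neighborhood of \(\R P^2\).

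\textbf{Step 3: part (3), wall fibers.} Hamiltonian invariants should separate the wall fibers from standard fibers.

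\begin{itemize}
\item The symplectic area class on \(\pi_2(\CP^2,L)\) together with the Maslov class: a wall fiber and \(T(y,y)\) have the same area/Maslov data. So this data only forces any candidate to be \(T(y,y)\).
\item To rule out \(T(y,y)\) I would use the keyword invariant, the displacement-energy germ of Chekanov type, i.e.\ the displacement energy of nearby fibers \(L(x+\epsilon a,y+\epsilon b)\) as a function of \((a,b)\). Equivalently one could count Maslov-2 disk classes (Chekanov versus Clifford type). For \(T(y,y)\) the germ is determined by the three facets of the triangle. For the wall fiber, the nearby tori are governed by the two different affine charts on either side of the wall. The resulting germ is non-linear along a line where the toric one is smooth, which distinguishes them.
\end{itemize}

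To compute the germ, I use upper bounds from explicit displacing Hamiltonians in each chart, via probes, and lower bounds from Chekanov's energy–area inequality, \(e(L)\ge\) the minimal disk area. The monotone case \((\tfrac13,\tfrac13)\)-type point is Wu's torus, known to be exotic, and it serves as a check.

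\textbf{Main obstacle.} I expect the main obstacle to be Step 3 for non-monotone wall fibers. The lower bounds on displacement energy for non-monotone tori need holomorphic-disk or quasimorphism-type input, since energy–area only detects the minimal area. I would first try to show that Chekanov's bound is sharp on both sides of the wall, so that the germ is piecewise linear with a kink exactly across the wall direction, while the germ of \(T(y,y)\) has its kinks only along the facet directions of \(\Delta_{\std}\).
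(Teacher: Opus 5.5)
\textbf{Parts (1)--(2) are not actually proved.} Your Step~1 claims that on each chamber the $\mu_W$-fibration coincides with $A_\pm\circ\mu_{\std}+c_\pm$. This is false. In the Oakley--Usher coordinates, $L(x,y)=\{|z_0^2+z_1^2+z_2^2|=4\sqrt{y(1-y)},\ \im(\bar z_1z_2)=x+2y-1\}$ is invariant under every real rotation of the $(z_1,z_2)$-plane, while no $T(a,b)$ with $ab\neq0$ is. (Also, $\{x=2\}$ is not an edge of $\Delta_W$; it is only the vertex $(2,0)$.)

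In Step~2 you fall back on a weaker fact: $L(x,y)$ is an orbit of some Hamiltonian $T^2$-action on an open set, with action coordinates matching $\mu_{\std}$ after an integral affine map. That cannot suffice by itself. The wall fibers are regular orbits of the very same action, which lives on $\CP^2(\sqrt2)\setminus\R P^2$. Near a wall point, either affine map matches the local action coordinates with those of $\mu_{\std}$. So an argument using only this structure would also make the wall fibers standard, contradicting (3). Your ``interpolation through actions containing the fiber'' is therefore exactly the missing content, and you do not construct it.

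The paper supplies it by reduction:
\begin{enumerate}
\item All interior fibers lie in the ball chart $\{z_0\neq0\}$.
\item After the change $u_{1,2}=(w_1\mp iw_2)/\sqrt2$, the function $G=\im(\bar w_1w_2)=\tfrac12(|u_1|^2-|u_2|^2)$ generates a circle action. It is free on $G^{-1}(p)$ precisely for $p=x+2y-1\neq0$.
\item The map $[u_1,u_2]\mapsto u_1u_2$ identifies the quotient with the disk $D_p$, carrying the area form $\frac{1}{2\sqrt{p^2+|w|^2}}\,\frac{i}{2}\,dw\wedge d\bar w$.
\item $L$ is the preimage of an embedded curve enclosing area $\pi(1-q)$.
\item A round circle about $0$ with the same area is Hamiltonian isotopic to it in $D_p$, and the isotopy lifts by Lemma~\ref{lem:lifting-reduced-isotopy}.
\item The lifted round circle is a unitary image of $T\bigl(\frac{1-q+|p|+p}{2},\frac{1-q+|p|-p}{2}\bigr)$.
\end{enumerate}
It is exactly the failure of freeness at $p=0$ that singles out the wall.

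\textbf{The reduction to the single candidate $T(y,y)$ in Step~3 is invalid.} Area and Maslov data are invariant only up to automorphisms of $\pi_2(\CP^2,L)\cong\Z^3$ fixing the Maslov class and the line class. Such automorphisms need not permute the basic disks. For example, $(e_1,e_2,e_3)\mapsto(2e_1-e_3,\ e_2-2e_1+2e_3,\ e_1)$ is one. It carries the area vector $(0.3,0.3,0.4)$ of $T(0.3,0.3)$ to $(0.2,0.5,0.3)$, which is the area vector of $T(0.2,0.5)$. By Theorem~\ref{thm:Brendel-classification}, $T(0.2,0.5)$ is not Hamiltonian isotopic to $T(0.3,0.3)$.

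This step is essential. For $q>1/3$ (i.e.\ $y<1/3$), the germ of the wall fiber is the single affine function $\frac{1-q-\epsilon_2}{2}$. That is the same form as the germ of any toric fiber whose minimal facet distance $\frac{1-q}{2}$ is attained only once, so germs alone cannot exclude those fibers.

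The paper pins the candidate down using (1)--(2). Suppose $\phi(\widetilde L(0,q))=T(A,B)$. For small $\varepsilon$, the torus $\phi(\widetilde L(\varepsilon,q))$ is a graph in a Weinstein neighborhood of $T(A,B)$, hence Hamiltonian isotopic to a nearby $T(A',B')$. Brendel's classification then gives $\multiset{\frac{1-q+2\varepsilon}{2},\frac{1-q}{2},q-\varepsilon}=\multiset{A',B',1-A'-B'}$. Letting $\varepsilon\to0$ forces $\multiset{A,B,1-A-B}=\multiset{\frac{1-q}{2},\frac{1-q}{2},q}$.

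The germ comparison itself is the paper's strategy, but two corrections apply. First, the mechanism you predict (``the wall germ is kinked where the toric one is smooth'') holds only for $q<1/3$. For $q>1/3$ it is reversed: the germ of $T(y,y)$ is $\min\{\frac{1-q}{2}+\epsilon_1,\frac{1-q}{2}+\epsilon_2\}$, while the wall germ is affine. At $q=1/3$ both germs are minima of three affine pieces. The paper separates them by noting that the linear parts sum to $0$ for the Clifford torus but to $\frac32\epsilon_2$ for the wall fiber. Second, your ``main obstacle'' disappears once (1)--(2) are in hand. Nearby off-wall fibers are then Hamiltonian isotopic to standard fibers, whose displacement energy $\min\{a,b,1-a-b\}$ is known (Proposition~\ref{dis}). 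So no new lower bounds for non-monotone tori are needed.
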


\begin{remark}
    We think it is also possible to use almost toric fibration and the results in \cite{ShelukhinTonkonogVianna} to get the same result.
\end{remark}

For context, the Hamiltonian distinction studied here is finer than
Lagrangian isotopy: every Lagrangian torus in \(\CP^2\) is Lagrangian
isotopic to the Clifford torus
\cite[Theorem A]{DimitroglouRizellGoodmanIvrii}.
Vianna also constructed infinitely many Hamiltonian isotopy classes of
monotone Lagrangian tori \cite[Theorem 1.1]{Vianna}.

\begin{remark}[Independence of the chosen symplectomorphism]
\label{rem:independence-identification}
The Hamiltonian isotopy classification above does not depend on the
chosen symplectomorphism from $\widehat{F}_4(0)$ to \(\mathbb{CP}^2(\sqrt{2})\).  
By Gromov's connectedness theorem
\cite{GromovPseudoHolomorphic}, the symplectomorphism group of
\((\CP^2(\sqrt{2}),\omega_{\mathrm{FS}})\) is connected.  Moreover, \(H^1(\CP^2(\sqrt{2});\R)=0\),
so the closed one-form
\(\iota_{X_t}\omega_\mathrm{FS}\) associated to the generating vector field of any
symplectic isotopy is exact at every time.  Hence every symplectic
isotopy on \(\CP^2(\sqrt{2})\) is Hamiltonian.
\end{remark}

By displacement energy germ we can get the following corollary
\begin{corollary}
    If $L(x, y)$ is Hamiltonian isotopic to $L(x^\prime, y^\prime)$ for $x+2y=x^\prime+2y^\prime=1$, then $(x, y)=(x^\prime, y^\prime)$
\end{corollary}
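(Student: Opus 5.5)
We will distinguish wall fibers with the displacement energy germ, a Hamiltonian isotopy invariant. For a Lagrangian torus \(L\subset \CP^2(\sqrt2)\), consider the function on \(H^1(L;\R)\) defined by
\[
b\mapsto e(L_b),
\]
where \(L_b\) ranges over nearby Lagrangian tori with flux \(b\) relative to \(L\), and \(e\) is displacement energy. If \(\phi\) is a Hamiltonian diffeomorphism with \(\phi(L)=L'\), then \(\phi\) carries the germ of \(L\) to that of \(L'\), after the identification \(\phi_*:H^1(L')\cong H^1(L)\), which is an integral linear isomorphism. It therefore suffices to compute the germ at each wall fiber \(L(x,y)\), \(x+2y=1\), and show it recovers the point \((x,y)\). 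Concretely, we will extract a numerical invariant such as the value \(e(L)\) itself, or the minimal symplectic area of a Maslov index 2 class. This invariant must vary injectively along the wall segment \(\{x+2y=1,\ 0<x<2\}\).

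\textbf{Step 1: nearby fibers.} Nearby fibers \(L(x',y')\) realize all small fluxes. On either side of the wall they are Hamiltonian isotopic to standard toric fibers by parts (1) and (2) of the Main Theorem. Their displacement energies are then known from the toric picture (Chekanov-type results for toric fibers of \(\CP^2\)). For \(T(a,b)\) this energy is \(2\pi\min(a,b,1-a-b)\), possibly with the Fukaya--Oh--Ohta--Ono refinement, which gives the exact value for standard fibers of \(\CP^2\). Hence, on the region \(x'+2y'<1\) near the wall, the germ is
\[
e\bigl(L(x',y')\bigr)=2\pi\min\bigl(y',\,1-x'-y',\,x'+2y'\bigr).
\]
On the region \(x'+2y'>1\) it is
\[
e\bigl(L(x',y')\bigr)=2\pi\min\bigl(x'+3y'-1,\,y',\,2-x'-4y'\bigr).
\]
By continuity of displacement energy in the Hofer-type sense for nearby Lagrangians, these formulas determine the germ at the wall point itself, with only the wall line (codimension one) missing.

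\textbf{Step 2: recovering the point.} The two formulas are piecewise linear in \((x',y')\). The loci where the minimum switches branch, and the gradients of the branches in flux coordinates of the fixed torus, are invariants up to \(GL_2(\Z)\). In particular the value at the point, obtained by continuity, is an invariant. On the wall, substituting \(x=1-2y\) gives
\[
\min\bigl(y,\;1-x-y,\;x+2y\bigr)=\min(y,\,y,\,1)=y
\]
for \(y\le 1/2\), and the formula from the other side agrees. So \(e(L(x,y))=2\pi y\), and \(y\in(0,1/2)\) determines the point on the wall. Hence two Hamiltonian isotopic wall fibers have equal \(y\), and then equal \(x=1-2y\).

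\textbf{Main obstacle.} The delicate point is that displacement energy of the wall fiber itself is only obtained as a limit, and continuity must be justified. The cleanest route is to use that Hamiltonian isotopy of \(L\) and \(L'\) induces isotopies of the nearby families, matching nearby fibers to nearby fibers. Then compare the values on open sets, where everything is computed from standard fibers. This gives the limiting constants \(2\pi y=2\pi y'\) directly, without needing continuity at the wall fiber.
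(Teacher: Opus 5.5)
There is a genuine gap. Your strategy matches the paper's: compute the displacement-energy germ of a wall fiber from the nearby off-wall fibers via parts (1) and (2) of the Main Theorem, and compare germs on open sets off the wall. But Step~2 rests on an arithmetic slip, and its conclusion is false.

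\textbf{The slip.} The third facet distance of $T(y',1-x'-y')$ is $1-y'-(1-x'-y')=x'$, not $x'+2y'$. With the correct triple $(y',\,1-x'-y',\,x')$, restricting to the wall $x=1-2y$ gives $\min\{y,y,1-2y\}$. Your own second formula gives the same thing, so it never ``agreed'' with $y$.

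\textbf{Why the invariant fails.} The limiting value is $\min\{y,1-2y\}$ (times $2\pi$ in your normalization), and this is two-to-one on $0<y<\tfrac12$: whenever $y<\tfrac13$, the points $y$ and $\tfrac{1-y}{2}$ give the same value. For instance, $L(\tfrac12,\tfrac14)$ and $L(\tfrac14,\tfrac38)$ have limiting multisets $\multiset{\tfrac14,\tfrac14,\tfrac12}$ and $\multiset{\tfrac38,\tfrac38,\tfrac14}$, both with minimum $\tfrac14$. So the value alone cannot separate the branch $y<\tfrac13$ from $y>\tfrac13$.

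\textbf{Continuity.} The appeal to continuity in Step~1 is also unjustified. The Clifford torus is nondisplaceable, while nearby toric fibers have finite displacement energy tending to a finite limit. So you must work only with limits of the germ off the wall, as in your last paragraph.

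\textbf{The missing ingredient} is the shape of the germ, which you mention but do not use. Use the paper's coordinates $p=x+2y-1$, $q=1-2y$, so the wall is $p=0$ with $q=x$. The germ of $\widetilde L(0,q)$ is then:
\begin{itemize}
\item $\frac{1-q-\epsilon_2}{2}$ for $q>\tfrac13$, an affine function with no kink across the wall;
\item $q+\epsilon_2-|\epsilon_1|$ for $q<\tfrac13$, which is kinked along the wall.
\end{itemize}
Being affine is preserved by precomposition with a linear isomorphism $A$. Concretely, choose $\epsilon$ with $\epsilon_1\neq0$ and $A\epsilon$ off the wall. Then $S(t\epsilon)+S(-t\epsilon)=2q-2t|\epsilon_1|$ depends on $t$ when $q<\tfrac13$, whereas it is the constant $1-q'$ when $q'>\tfrac13$. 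This excludes $q<\tfrac13<q'$.

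The limit value $c(q)=\min\{q,\tfrac{1-q}{2}\}$ then finishes the argument. It is injective on each side of $\tfrac13$ and attains its maximum $\tfrac13$ only at $q=\tfrac13$, so $q=q'$. This is exactly how the paper's proof of the pairwise-distinction corollary goes.

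Minor: the wall meets $\Int\Delta_W$ in $0<x<1$, not $0<x<2$.
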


\begin{remark}
    As in \cite{McDuffProbes}, we know the Lagrangian tori $L(x, y)$ for $x+2y=1$ and $\frac{1}{2}<x<1$ are displaceable by probes. Combining with the Main Theorem, they are a continuum of displaceable Lagrangian tori but not Hamiltonian isotopic to standard toric fibers in $\CP^2$.  It is still an open question if the Lagrangian tori $L(x, y)$ for $x+2y=1$ and $0<x<\frac{1}{3}$ are displaceable. 
\end{remark}

\section{Symplectic preliminaries}
\label{sec:symplectic-preliminaries}

\subsection{Hamiltonian reduction and lifting}
\label{subsec:reduction-and-lifting}

We record the support-localized lifting statement used in Section \ref{sec:OU-model}. This is a more general result than what we need to prove the main theorem.

\begin{lemma}[Lifting an isotopy from a reduced surface]
\label{lem:lifting-reduced-isotopy}
Let $G\colon M\to\mathbb{R}$ be a smooth function on a symplectic four-manifold
$(M,\omega_M)$ whose Hamiltonian flow generates an $S^1$-action. Suppose that the action is
free on $G^{-1}(p)$, where $p\in\mathbb{R}$. Write
\[
\iota\colon G^{-1}(p)\hookrightarrow M,
\qquad
\pi\colon G^{-1}(p)\longrightarrow G^{-1}(p)/S^1
\]
for the inclusion and quotient maps, and let $\omega_{\mathrm{red}}$ be the reduced
symplectic form characterized by
\[
\pi^*\omega_{\mathrm{red}}=\iota^*\omega_M.
\]
Let $\Gamma_0,\Gamma_1\subset G^{-1}(p)/S^1$ be compact embedded circles related by a
compactly supported Hamiltonian isotopy $\overline{\phi}_t$, and set
\[
\mathcal T:=\bigcup_{t\in[0,1]}\overline{\phi}_t(\Gamma_0).
\]
Then the Lagrangian tori $\pi^{-1}(\Gamma_0)$ and $\pi^{-1}(\Gamma_1)$ are related by
a Hamiltonian isotopy of $M$, supported in an arbitrarily small invariant neighborhood of
$\pi^{-1}(\mathcal T)$.
\end{lemma}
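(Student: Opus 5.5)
The plan is to use the local normal form near a regular level of $G$ and lift the reduced Hamiltonian as an $S^1$-invariant function. Since $S^1$ acts freely on $G^{-1}(p)$ and $p$ is then a regular value, the equivariant coisotropic neighbourhood theorem (Marle--Guillemin--Sternberg normal form) applies. It gives an $S^1$-invariant neighbourhood $U$ of $G^{-1}(p)$ together with an equivariant symplectomorphism
\[
U\cong G^{-1}(p)\times(p-\varepsilon,p+\varepsilon),\qquad \omega_M\cong \pi^*\omega_{\mathrm{red}}+d(s\,\alpha),
\]
where $\alpha$ is a connection one-form on the principal $S^1$-bundle $\pi\colon G^{-1}(p)\to G^{-1}(p)/S^1$ and $s=G-p$. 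This holds after shrinking $\varepsilon$ near the compact set $\pi^{-1}(\mathcal T)$, which is all we need because everything will be supported there. In particular the quotient map $\pi$ extends to an $S^1$-invariant map $\Pi\colon U\to G^{-1}(p)/S^1$ whose restriction to each level $G^{-1}(p+s)$ is the reduction map of that level.

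Next write $\overline{\phi}_t$ as the flow of a compactly supported time-dependent Hamiltonian $\overline H_t$ on the reduced surface. Cut it off so that it is supported in a small neighbourhood $V$ of $\mathcal T$; this is possible because only the values near $\overline{\phi}_t(\Gamma_0)$ affect the orbit of $\Gamma_0$. More carefully, replace $\overline H_t$ by $\rho\,\overline H_t$ with $\rho\equiv1$ near $\overline{\phi}_t(\Gamma_0)$ for all $t$, using compactness of $[0,1]$.

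Then define
\[
H_t:=\chi(G)\,\Pi^*\overline H_t,
\]
where $\chi$ is a bump function equal to $1$ near $p$ and supported in $(p-\varepsilon,p+\varepsilon)$. This $H_t$ is $S^1$-invariant and compactly supported in the invariant set $\Pi^{-1}(V)\cap G^{-1}(\operatorname{supp}\chi)$. That set is contained in an arbitrarily small invariant neighbourhood of $\pi^{-1}(\mathcal T)$, once $V$ and $\varepsilon$ are chosen small.

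Finally check the flow. Since $H_t$ is invariant, $\{H_t,G\}=0$, so the flow $\phi_t$ of $H_t$ preserves $G^{-1}(p)$ and commutes with the $S^1$-action. On $G^{-1}(p)$ we have $d\chi(G)=0$ there to first order. Hence $X_{H_t}$ is tangent to the level, and its projection satisfies
\[
\iota_{\pi_*X_{H_t}}\omega_{\mathrm{red}}=-d\overline H_t,
\]
by the defining relation $\pi^*\omega_{\mathrm{red}}=\iota^*\omega_M$ and the fact that $X_{H_t}$ is determined modulo the orbit direction. Therefore $\pi\circ\phi_t=\overline{\phi}_t\circ\pi$ on $G^{-1}(p)$, and $\phi_1(\pi^{-1}(\Gamma_0))=\pi^{-1}(\overline{\phi}_1(\Gamma_0))=\pi^{-1}(\Gamma_1)$.

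The tori $\pi^{-1}(\Gamma_i)$ are Lagrangian. They are $2$-dimensional, and $\omega_M$ restricts on them to $\pi^*(\omega_{\mathrm{red}}|_{\Gamma_i})=0$.

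The main obstacle is the first step. We need an invariant extension $\Pi$ of $\pi$ off the level set, valid uniformly on a neighbourhood of the compact set $\pi^{-1}(\mathcal T)$. The second step must then verify that the reduced vector field equals the Hamiltonian vector field of $\overline H_t$; this is where the normal form and the defining relation for $\omega_{\mathrm{red}}$ do the work. The rest is bookkeeping with cutoffs.
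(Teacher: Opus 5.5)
Your proposal is correct and follows essentially the paper's argument: extend the reduced Hamiltonian to an $S^1$-invariant function near $G^{-1}(p)$ through a collar, cut it off both on the reduced surface and in the transverse direction, and use invariance together with $\pi^*\omega_{\mathrm{red}}=\iota^*\omega_M$ to get $\pi\circ\phi_t=\overline{\phi}_t\circ\pi$. The only difference is how you build the collar: you use the coisotropic normal form, whereas the paper uses the flow of $Y=\nabla G/\|\nabla G\|^2$ for an $S^1$-invariant metric. Since your verification only uses invariance and $\Pi|_{G^{-1}(p)}=\pi$, never the symplectic part $\omega_M\cong\pi^*\omega_{\mathrm{red}}+d(s\alpha)$, the step you flag as the main obstacle is in fact elementary.
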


\begin{proof}
This is the circle-reduction and support-localized version of
\cite[Lemma~3.1, p.~3861]{AbreuMacarini}. The same lifting construction, including extension
by a cutoff, appears in the proof of
\cite[Theorem~3.2]{BrendelClassification}; see also
\cite[Remark~3.4]{BrendelClassification} for the support refinement. We give the details needed
in the present noncompact setting.

Let $U$ be any $S^1$-invariant open neighborhood of $\pi^{-1}(\mathcal T)$ in $M$. Note that
$\mathcal T$ is compact. Since
\[
\pi\colon G^{-1}(p)\longrightarrow G^{-1}(p)/S^1
\]
is a principal $S^1$-bundle, $\pi^{-1}(\mathcal T)$ is compact. Choose relatively compact
open neighborhoods $V$ and $V_0$ of $\mathcal T$ in $G^{-1}(p)/S^1$ such that
\[
\bar V_0\subset V, \quad \pi^{-1}(\bar V)\subset U\cap G^{-1}(p).
\]
By averaging over $S^1$ we can get an $S^1$-invariant Riemannian metric $g$ on $M$. Then define
\[
Y=\frac{\nabla G}{\|\nabla G\|^2}.
\]
Then $Y$ is an $S^1$-invariant vector field and 
\begin{align*}
g(Y, X_G)=\frac{dG(X_G)}{||\nabla G||^2}=0
\end{align*}
where $X_G$ is the Hamiltonian vector field of $G$. Thus $Y$ is transverse to $G^{-1}(p)$. 
 Since $\pi^{-1}(\bar V)$ is compact, there is $\epsilon>0$ such that the flow $\phi_Y^s$ of $Y$ defines an $S^1$-invariant collar neighborhood $\mathcal{N}$ of $\pi^{-1}(V)$
 \begin{align*}
     \Phi: \pi^{-1}(V)\times (-\epsilon, \epsilon) &\to \mathcal{N}\subset \mathcal{U}\\
     (z, s) &\mapsto \phi_Y^s(z)
 \end{align*}

Next we choose two cutoff functions. Let $\beta$ be a compactly supported smooth function on $G^{-1}(p)/S^1$  such that 
 \begin{align*}
     0\le \beta\le 1, \quad \beta|_{V_0}=1, \quad \mathrm{supp}(\beta)\subset V
 \end{align*}
Let $\chi$ be a compactly supported smooth function on $(-\epsilon, \epsilon)$ such that 
\begin{align*}
    0\le\chi\le 1, \quad \chi=1\: \text{on a neighborhood of}\:0
\end{align*}

Finally we define $K_t: M\to\mathbb{R}$ on $\mathcal{N}$ by
\begin{align*}
    K_t(\Phi(z, s))=\chi(s)\beta(\pi(z))\bar K_t(\pi(z))
\end{align*}
where $\bar K_t$ is the Hamiltonian generating $\bar \phi_t$, and extend it to $M$ by $0$. We have 
\begin{align*}
    K_t|_{\pi^{-1}(V_0)}=(\pi^*\bar K_t)|_{\pi^{-1}(V_0)}
\end{align*}
Since $K_t$ is $S^1$-invariant, $\{K_t, G\}=0$ and the Hamiltonian flow $\phi_K^t$ of $K_t$ preserves the level set $G^{-1}(p)$. 

Let $v\in T_z(\pi^{-1}(V_0))=T_z(G^{-1}(p))$ for $z\in V_0$.
Using $\pi^*\omega_{\mathrm{red}}=\iota^*\omega_M$, we have
\begin{align*}
    \omega_{\mathrm{red}}(\pi_*X_{K_t}, \pi_*v)=\omega_{\mathrm{red}}(X_{\bar K_t}, \pi_*v)
\end{align*}
where $X_{K_t}$ and $X_{\bar K_t}$ are the Hamiltonian vector fields of $K_t$ and $\bar K_t$ respectively. Thus
\begin{align}
    \pi_*X_{K_t}=X_{\bar K_t}
\end{align}
Then we have the following relation for their flows
\begin{align}
    \pi\circ \phi_K^t=\bar \phi_t\circ \pi
\end{align}
Applying the above relation to $\pi^{-1}(\Gamma_0)$, we have 
\begin{align*}
    \phi_K^t(\pi^{-1}(\Gamma_0))=\pi^{-1}(\bar \phi_t(\Gamma_0))
\end{align*}
Furthermore, 
\begin{align*}
    \phi_K^1(\pi^{-1}(\Gamma_0))=\pi^{-1}(\bar \phi_1(\Gamma_0))=\pi^{-1}(\Gamma_1)
\end{align*}
\end{proof}

\subsection{Toric fibers and displacement-energy germs}
\label{subsec:displacement-energy-germs}

Let $(M, \omega_M)$ be a symplectic manifold. For a compactly supported Hamiltonian $H\colon[0,1]\times M\to\mathbb{R}$, its Hofer norm
is
\[
\|H\|_{\mathrm H}
=
\int_0^1
\left(
\max_M H_t-\min_M H_t
\right)\,dt.
\]
For a compact subset $K\subset M$, the displacement energy of $K$ is defined as 
\[
e_M(K)
=
\inf\left\{
\|H\|_{\mathrm H}:
\phi_H^1(K)\cap K=\varnothing
\right\},
\]
with $e_M(K)=+\infty$ when $K$ is nondisplaceable.

For each closed embedded Lagrangian submanifolds in $(M, \omega_M)$ by Weinstein's neighborhood theorem, there is a symplectomorphism from a neighborhood of the zero section of $T^*L$ to a neighborhood of $L$ in $M$ such that the zero section is mapped to $L$. Then Chekanov--Schlenk defined the displacement energy germ as following.

\begin{definition}[Displacement-energy germ]\cite{ChekanovSchlenk}
\label{def:displacement-energy-germ}
The \textit{displacement energy germ} is a function germ 
\begin{align*}
S_L^e: H^1(L; \mathbb{R}) &\to [0, +\infty]\\
L_{\xi} &\mapsto e_M(L_{\xi})
\end{align*}
at the point $0\in H^1(L; \mathbb{R})$, where $\xi\in H^1(L; \mathbb{R})$ is sufficiently small and $L_\xi$ is the image of a closed $1$-form on $L$ representing the class $\xi$. 

Displacement energy germs are symplectically invariant in the following sense: for each symplectomorphism $\psi$ we have
\begin{align*}
    S_{\psi(L)}^e=S_L^e\circ(\psi|_L)^*
\end{align*}
\end{definition}

\begin{definition}[Integral-affine lengths and facet distances]
\label{def:facet-distances}
An affine coordinate change on \(\R^2\) is \emph{integral affine} if its
linear part lies in \(GL(2,\Z)\).  If a line segment has rational
direction and the difference of its endpoints is
\(\pm\ell\nu\), where \(\ell\geq0\) and
\(\nu\in\Z^2\) is primitive, then \(\ell\) is its
\emph{integral-affine length}.

Let
\[
 \Delta
 =
 \{u\in\R^2:
 \ell_i(u)=\langle\nu_i,u\rangle+\kappa_i\geq0,\
 i=1,\ldots,N\}
\]
be a Delzant polygon, where \(\nu_i\in\Z^2\) is the primitive inward
normal to the facet \(F_i=\{\ell_i=0\}\).  For
\(u\in\Int\Delta\), the number \(\ell_i(u)\) is the
\emph{integral-affine distance} from \(u\) to \(F_i\).  Primitivity of
\(\nu_i\) fixes its scale; in particular, this is not Euclidean
distance.  The \emph{facet-distance multiset} of \(u\), or of the toric
fiber over \(u\), is
\[
 \mathfrak d_\Delta(u)
 =
 \multiset{\ell_1(u),\ldots,\ell_N(u)},
\]
where the braces denote an unordered multiset and repetitions are
retained.

For the standard moment triangle
\[
 \Delta_{\std}
 =
 \{(A,B)\in\R^2:A\geq0,\ B\geq0,\ 1-A-B\geq0\},
\]
the ordered facet-distance triple is
\begin{equation}\label{eq:nearby-wall-facet-distances}
 (A,B,C),\qquad C=1-A-B,
\end{equation}
and the corresponding multiset is \(\multiset{A,B,C}\).  In the
normalization \(\int_{\CP^1}\omega_M=2\pi\), the three associated basic
disk areas are \(2\pi A,2\pi B,2\pi C\).
\end{definition}

Although it is a well known result we also include the computation of the displacement energy of a standard toric fiber in $\CP^2(\sqrt{2})$ for the convenience of the readers.
We will use symmetric probes introduced by Abreu-Borman-McDuff in \cite{AbreuBormanMcDuff}, generalizing the definition of probes introduced by McDuff in \cite{McDuffProbes}.
\begin{definition}[Symmetric probe]
    A \textit{probe} $P$ in a rational polytope $\Delta\subset \mathbb{R}^n$ is a directed rational line segment contained in $\Delta$ whose initial point $b_P$ lies in the interior of a facet $F_P$ of $\Delta$ and whose direction vector $v_P\in\mathbb{Z}^n$ is primitive and integrally transverse to the facet $F_P$. A probe $P$ is \textit{symmetric} if the endpoint $e_P$ lies on the interior of a facet $F_P^\prime$ that is integrally transverse to $v_P$.
\end{definition}
\begin{proposition}\label{dis}
    The displacement energy of the standard toric fiber $T(a, b)$ is 
    \begin{align*}
    e_{\CP^2(\sqrt{2})}(T(a, b))=\min\{a, b, 1-a-b\}
    \end{align*}
    for $(a, b)\in\Delta_{\mathrm{std}}$ and $(a, b)\ne\left(\frac{1}{3}, \frac{1}{3}\right)$.
\end{proposition}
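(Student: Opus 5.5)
Writing $m=\min\{a,b,1-a-b\}$, I will prove the two inequalities $e\le m$ and $e\ge m$ separately.

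\textbf{Upper bound via symmetric probes.} By the $S_3$ symmetry of $\Delta_{\std}$, realized by integral-affine automorphisms of the triangle and hence by symplectomorphisms permuting the $z_i$, assume $a=m$, so $a\le b$ and $a\le 1-a-b$, with strict inequality in at least one of them since $(a,b)\ne(\frac13,\frac13)$. Take the probe $P$ with direction $v_P=(1,0)$ starting on the facet $\{A=0\}$ and passing through $(a,b)$. It is integrally transverse to $\{A=0\}$ and ends on the facet $\{A+B=1\}$. Since $\langle(1,1),(1,0)\rangle=1$, the probe is also integrally transverse there, so it is symmetric. Its integral length is $1-b$, and $(a,b)$ lies at distance $a$ from the initial point. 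The Abreu--Borman--McDuff criterion for symmetric probes says that a fiber at distance less than half the probe's length from the initial point is displaceable with energy at most that distance. Here $a<\frac{1-b}{2}$ is equivalent to $a<1-a-b$. In the remaining case $a=1-a-b<b$, I instead use the probe from $\{A=0\}$ in direction $(1,-1)$. It ends on $\{B=0\}$ and is symmetric, since $\langle(0,1),(1,-1)\rangle=-1$. Its length is $b$, so the condition becomes $a<b/2$, i.e.\ $a<b-a$, which holds because $b-a=1-2a-a+\dots$ reduces to $a<b$. Hence $e\le a$. A cleaner alternative is to displace via the circle action rotating $z_1$: the fiber lies in the reduced sphere, and displacing the circle $\{|z_1|^2/2=a\}$ in a disk of area $2\pi a$ costs energy $a$ in the paper's normalization.

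\textbf{Lower bound.} This is the main obstacle. I will use Floer theory with bulk/weak bounding cochains (FOOO), or more simply a Chekanov-type bound: for a Lagrangian torus, $e(L)\ge$ the minimal area of a nonconstant holomorphic disk with boundary on $L$ that contributes to a nonvanishing count. For $T(a,b)$, Cho--Oh gives the disk potential
\[
W=T^{a}x+T^{b}y+T^{1-a-b}/(xy).
\]
Choose $c=m$ and deform by the superpotential (or by bulk deformation when the minimum is not unique in the generic-argument sense) to find a critical point, so that $HF\ne0$ after truncating energy. The FOOO energy estimate then gives $e\ge m$. Concretely, I would cite FOOO's theorem that $e(T(u))\ge\min_i\ell_i(u)$ whenever the fiber is ``balanced'' modulo this energy, or Chekanov's result in its Lagrangian-torus form. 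The delicate point is guaranteeing nonvanishing Floer cohomology over the Novikov ring truncated at level $m$. I would argue this by working in $\Lambda/T^{m}$, where all disk contributions of area $\ge m$ vanish, so the differential is zero below energy $m$ and any displacing Hamiltonian must have Hofer norm at least $m$.

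Combining the two bounds gives equality.
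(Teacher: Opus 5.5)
Your overall structure is the paper's: symmetric probes give $e\le m$, and a holomorphic-disc area bound gives $e\ge m$. You reduce to $a=m$ by the $S_3$ symmetry, whereas the paper cites Brendel's Lemma~3.6 for pairwise distinct distances and uses the vertical probe $x=a$ when $a=1-a-b$; the lower bound the paper simply imports from Brendel's Proposition~3.2.

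\textbf{Upper bound.} The horizontal-probe case is correct, but the anti-diagonal probe is miscomputed. It runs from $(0,a+b)$ to $(a+b,0)$, so its integral length is $a+b$, not $b$. With length $b$, your criterion $a<b/2$ is strictly stronger than $a<b$ and actually fails, e.g.\ at $a=1-a-b=0.3$, $b=0.4$. So the claimed reduction to $a<b$ is not valid as written. With the correct length, $(a,b)$ lies at distance $a$ from the initial point, and the criterion $a<(a+b)/2$ is exactly $a<b$, so this case does close.

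\textbf{Lower bound.} This paragraph is the real weak point. The route through critical points of $W$ and nonvanishing Floer cohomology cannot work here. Every fiber other than $T(\frac13,\frac13)$ is displaceable, so $HF$ over $\Lambda$ vanishes for every weak bounding cochain. FOOO's ``balanced'' fibers are nondisplaceable, so that hypothesis is never met either. Nor does Chekanov's theorem carry any ``nonvanishing count'' condition.

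What you need is Chekanov's bound $e(L)\ge\sigma(M,L,J)$. Here $\sigma(M,L,J)$ is the minimal symplectic area of nonconstant $J$-holomorphic spheres in $M$ and nonconstant $J$-holomorphic discs with boundary on $L$, for a tame $J$. For the standard $J$, Cho--Oh's classification shows that every nonconstant disc on $T(a,b)$ has class $k_1\beta_1+k_2\beta_2+k_3\beta_3$ with $k_i\ge0$ not all zero. Its area is therefore proportional to $k_1a+k_2b+k_3(1-a-b)\ge m$. Every sphere has at least the area of a line, which corresponds to $1>m$. Hence $\sigma$ equals $m$ in the paper's normalization.

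Your truncation over $\Lambda_0/T^{m}$ sketches the Floer-theoretic proof of exactly this bound. However, it omits the filtered continuation-map step: a displacement of energy $E$ forces $T^{E}$ to annihilate $HF(L,L;\Lambda_0)$, whose torsion exponents are all at least $m$. Citing Chekanov directly, as the paper effectively does through Brendel, is the clean fix.
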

\begin{proof}
    If $a$, $b$, and $1-a-b$ are distinct from each other, then the result is \cite[Lemma 3.6]{BrendelVersal}.

    If $a=1-a-b$, denote the point $(a, b)$ by $A$. Consider the symmetric probe $P$ $x=a$. The intersection points of $P$ with the facet $\{y=0\}$ and the facet $\{1-x-y=0\}$ are $B(a, 0)$ and $C(a, 1-a)$ respectively. Then the distance $d(A, B)$ between points $A$ and $B$ is $b$ and the distance $d(A, C)$ between the points of $A$ and $C$ is $1-a-b$. Thus by \cite[Proposition 3.4]{BrendelVersal} we have 
    \begin{align*}
        e_{\CP^2(\sqrt{2})}(T(a, b))\le 
        \begin{cases}
            b & \mathrm{for}\: b<1-a-b\\
            1-a-b & \mathrm{for}\: b>1-a-b\\
        \end{cases}
    \end{align*}
    In either case we have $e_{\CP^2(\sqrt{2})}(T(a, b))\le\min\{a, b, 1-a-b\}$. On the other hand by \cite[Proposition 3.2]{BrendelVersal}, $e_{\CP^2(\sqrt{2})}(T(a, b))\ge\min\{a, b, 1-a-b\}$. Thus
    \begin{align*}
        e_{\CP^2(\sqrt{2})}(T(a, b))=\min\{a, b, 1-a-b\}
    \end{align*}
    \begin{figure}[h]
    \centering
    \includegraphics[scale=0.7]{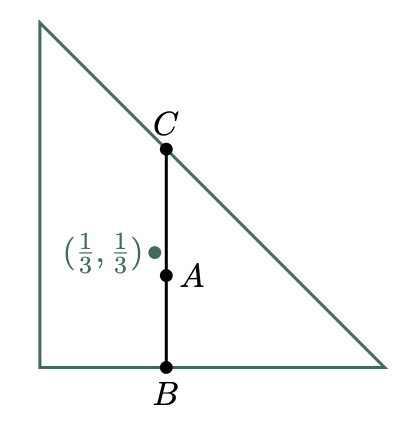}
    \caption{An illustration of the symmetric probe $x=a$}
\end{figure}

If $b=1-a-b$, then the proof is the same but use the symmetric probe $y=b$. If $a=b$, then we can use either the symmetric probe $x=a$ or $y=b$.
\end{proof}

We will also use the following complete classification.

\begin{theorem}[Brendel]
\label{thm:Brendel-classification}
Two standard toric fibers in $\mathbb{CP}^2(\sqrt{2})$, with facet-distance triples
$(A,B,C)$ and $(A',B',C')$, are Hamiltonian isotopic if and only if
\[
\{\!\{A,B,C\}\!\}
=
\{\!\{A',B',C'\}\!\}.
\]
\end{theorem}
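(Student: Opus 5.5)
The statement has two directions. The ``if'' direction is constructive, and the ``only if'' direction rests on a Hamiltonian invariant that sees exactly the unordered facet distances. I would treat them separately.

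\emph{If direction.} Assume \(\multiset{A,B,C}=\multiset{A',B',C'}\). Then the two ordered triples differ by a permutation of the three facets of \(\Delta_{\std}\). Every permutation of the facets is realized by an integral-affine automorphism of \(\Delta_{\std}\), namely the \(S_3\)-symmetry of the triangle. This symmetry is induced by a permutation of the homogeneous coordinates \([z_0:z_1:z_2]\). Such a permutation is a unitary map, hence a symplectomorphism of \(\CP^2(\sqrt2)\) that carries \(\mu_{\std}^{-1}(u)\) onto \(\mu_{\std}^{-1}(\sigma(u))\). By \Cref{rem:independence-identification}, the symplectomorphism group is connected and every symplectic isotopy is Hamiltonian. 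Alternatively, one can note directly that \(U(3)\) is connected and acts through \(PU(3)\) by Hamiltonian diffeomorphisms. Either way the permutation map is Hamiltonian isotopic to the identity, so the two fibers are Hamiltonian isotopic.

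\emph{Only if direction.} Suppose \(\phi\) is a Hamiltonian diffeomorphism with \(\phi(T(a,b))=T(a',b')\). I would use the displacement-energy germ of \Cref{def:displacement-energy-germ}. Fix the basis of \(H^1(T;\R)\) dual to the two torus-action circles. Then a nearby fiber \(T(a+\xi_1,b+\xi_2)\) is the graph \(L_\xi\) of a closed form in class \(\xi\). By \Cref{dis}, the germ is
\[
S^e_{T(a,b)}(\xi)=\min\{a+\xi_1,\;b+\xi_2,\;1-a-b-\xi_1-\xi_2\},
\]
at least away from the monotone fiber. The monotone fiber \((\tfrac13,\tfrac13)\) itself is handled separately: it is the only fiber with nondisplaceable germ value, or equivalently the only monotone one, and monotonicity is preserved by symplectomorphisms.

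Symplectic invariance gives \(S^e_{T'}=S^e_{T}\circ(\phi|_T)^*\), where \((\phi|_T)^*\in GL(2,\Z)\) is a linear map on \(H^1\). Hence the two piecewise-linear germs agree after a linear change of variables. From this I would read off the following data:
\begin{enumerate}
\item The value at \(0\), which is \(\min\{A,B,C\}\).
\item The set of linear pieces that realize the minimum at \(0\), as a set of primitive covectors \(\nu_i\), up to the linear map. The number of pieces active at \(0\) equals the multiplicity of the minimum.
\end{enumerate}
This only recovers the smallest distance and its multiplicity, which is not enough on its own. To recover the full multiset, I would additionally use the Lagrangian isotopy class data. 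The affine structure on \(H^1\) near \(0\) carries the symplectic area classes: the Maslov index \(2\) disk boundaries \(\partial\beta_i\) pair with \(\xi\) to give \(\ell_i(u+\xi)\), and \(\phi\) preserves areas and Maslov indices of disks in \(\pi_2(M,L)\). The areas of all Maslov-\(2\) classes \(\beta\in\pi_2(\CP^2,T)\) form the affine-linear functions \(\ell_i+k\cdot\) (line class areas) combinations constrained by \(\sum\beta_i=\) line.

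Concretely, \(\pi_2(\CP^2,T)\cong\Z^3\) is generated by \(\beta_1,\beta_2,\beta_3\), with areas \(2\pi A,2\pi B,2\pi C\). \(\phi\) induces an isomorphism of \(\pi_2\) preserving the area functional \(\omega\) and the Maslov class \(\mu\). I would then argue that such an isomorphism must permute the \(\beta_i\), perhaps after composing with a known symmetry; this forces the area multisets to match.

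This is where I expect the main obstacle. An abstract lattice automorphism preserving \(\mu\) and \(\omega\) need not permute the basis, so the argument needs an actual holomorphic-disk input. The realistic route is to cite Brendel's proof, which uses the Chekanov-type displacement-energy germ together with a symplectic capacity/versal deformation argument. My plan is to reduce to the germ comparison above plus the statement that the \emph{shape} of the non-smooth locus of \(S^e\) near \(0\) determines all three \(\ell_i\). Concretely, I would evaluate the germ along rays to recover the second-smallest distance from where the minimizing piece changes. This step is the one I would check most carefully, and if it fails I would fall back on \cite{BrendelVersal} directly.
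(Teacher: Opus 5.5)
Your ``if'' direction is correct: a permutation of homogeneous coordinates permutes $\tfrac12|z_0|^2,\tfrac12|z_1|^2,\tfrac12|z_2|^2$, which are the three facet distances, and it lies in the connected group $PU(3)$ of Hamiltonian diffeomorphisms.

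\textbf{The gap.} It is in the ``only if'' direction. The step you finally rely on is to read off the second-smallest distance from where the minimizing piece of $S^e$ changes. This cannot work. If the minimum $a$ is attained at a single facet, then near $0$ the germ is the single affine function $\xi\mapsto a+\langle\xi,\partial\beta_1\rangle$, and it has no non-smooth locus at all. For instance, the germs of $T(0.1,0.3)$ and $T(0.1,0.4)$ coincide, since both equal $0.1+\xi_1$. So the germ alone only records $\min\{A,B,C\}$ and its multiplicity.

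Your fallback of citing Brendel is what the paper does. It quotes \cite[Proposition~5.4]{BrendelClassification}, not \cite{BrendelVersal}: fibers are Hamiltonian isotopic iff they are related by an integral-affine symmetry of $\Delta_{\std}$. It then checks that these symmetries are exactly the facet permutations. As written, however, your plan does not itself establish this direction.

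\textbf{The missing idea.} Your lattice route in fact closes, with no holomorphic-disk input beyond Proposition~\ref{dis}, once you feed the germ into it.

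\emph{Reduction.} By the germ value and the number of active pieces, $T$ and $T'$ have the same minimum and the same multiplicity. Multiplicity three is the Clifford torus, and multiplicity two determines the multiset from the minimum. In the remaining case, use the ``if'' direction to reduce to $T=T(a,b)$ and $T'=T(a,b')$ with $a<b,c$ and $a<b',c'$.

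\emph{Pinning down $\beta_1$.} Let $\Phi=\phi_*$ on $H_2(\CP^2,T)\cong\Z^3$, with areas divided by $2\pi$. It preserves $\omega$ and $\mu$, and it fixes the line class $h=\beta_1+\beta_2+\beta_3$. Naturality of the germ forces $(\phi|_T)_*\partial\beta_1=\partial\beta_1'$. Hence $\Phi\beta_1-\beta_1'\in\ker\partial=\Z h$, and comparing areas gives $\Phi\beta_1=\beta_1'$.

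\emph{Constraining $\beta_2$.} Write $\Phi\beta_2=x\beta_1'+y\beta_2'+z\beta_3'$.
\begin{enumerate}
\item The Maslov index gives $x+y+z=1$.
\item The condition $\Phi h=h$ gives $\Phi\beta_3=-x\beta_1'+(1-y)\beta_2'+(1-z)\beta_3'$.
\item Unimodularity of $\Phi$ gives $y-z=\pm1$.
\end{enumerate}
Comparing the areas of $\beta_2$ then yields
\[
b-b'\in(1-3a)\Z\qquad\text{or}\qquad b+b'-2a\in(1-3a)\Z .
\]

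\emph{Conclusion.} Both $b$ and $b'$ lie in the open interval $(a,1-2a)$, which has length $1-3a$. This forces $b'=b$ or $b'=1-a-b=c$, so the multisets agree.
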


\begin{proof}
This is an equivalent formulation of \cite[Proposition~5.4]{BrendelClassification} and the fact that an integral symmetry of $\Delta_{\mathrm{std}}$ is a permutation of the three facets. For completeness we prove that this theorem and the first part of \cite[Proposition~5.4]{BrendelClassification} are equivalent.

For a point \(u=(x,y)\in\Delta_{\mathrm{std}}\), its three facet distances are
\[
\ell(u)
=
\bigl(\ell_{1}(u),\ell_{2}(u),\ell_{3}(u)\bigr)
=
\bigl(x,y,1-x-y\bigr).
\]
Thus, if
\[
\ell(u)=(A,B,C),
\]
then
\[
A=x,\qquad B=y,\qquad C=1-x-y,
\]
and therefore
\[
u=(A,B),
\qquad
A+B+C=1.
\]
Hence the map
\[
\ell\colon \Delta_{\mathrm{std}}
\longrightarrow
\left\{
(A,B,C)\in\mathbb{R}_{\geq 0}^{3}
\;\middle|\;
A+B+C=1
\right\}
\]
is injective, with inverse
\[
\ell^{-1}(A,B,C)=(A,B).
\]

For each permutation \(\sigma\in S_{3}\), let
\[
P_{\sigma}(A_{1},A_{2},A_{3})
=
\bigl(A_{\sigma(1)},A_{\sigma(2)},A_{\sigma(3)}\bigr)
\]
and define
\[
g_{\sigma}
=
\ell^{-1}\circ P_{\sigma}\circ\ell.
\]
Then,
\[
\ell\bigl(g_{\sigma}(x)\bigr)
=
P_{\sigma}\bigl(\ell(x)\bigr).
\]
Since \(P_{\sigma}\) merely permutes three nonnegative numbers whose sum
is $1$, each \(g_{\sigma}\) preserves \(\Delta_{\mathrm{std}}\) and can easily be verified to be  integral-affine symmetries of
\(\Delta_{\mathrm{std}}\).

Conversely, every affine symmetry of the triangle
\(\Delta_{\mathrm{std}}\) permutes its three vertices. Since an affine map is
uniquely determined by its values on the three vertices, every
integral-affine symmetry of \(\Delta_{\mathrm{std}}\) is equal to one of the
maps \(g_{\sigma}\). 

Therefore,
\[
v=g_{\sigma}(u)
\quad\text{for some }\sigma\in S_{3}
\]
if and only if
\[
\ell(v)
=
P_{\sigma}\bigl(\ell(u)\bigr)
\quad\text{for some }\sigma\in S_{3}.
\]

Now write
\[
\ell(u)=(A,B,C),
\qquad
\ell(v)=(A',B',C').
\]
The preceding equivalence becomes
\[
v=g(u)
\quad\text{for some integral symmetry } g \text{ of }\Delta_{\mathrm{std}}
\]
if and only if
\[
(A',B',C')
\]
is a permutation of
\[
(A,B,C).
\]
Equivalently,
\[
v=g(u)
\quad\text{for some integral symmetry }g\text{ of }\Delta_{\mathrm{std}}
\quad\Longleftrightarrow\quad
\{\!\{A,B,C\}\!\}
=
\{\!\{A',B',C'\}\!\}.
\]
This proves that Theorem \ref{thm:Brendel-classification} and the first part of \cite[Proposition 5.4]{BrendelClassification} are equivalent.
\end{proof}

\section{The off-wall fibers}
\label{sec:OU-model}

We define $\mathbb{CP}^2(\sqrt{2})$ as the coisotropic reduction of the sphere of radius $\sqrt{2}$ in $\mathbb{C}^3$. 
As in \cite{WuExotic}, one begins with a symplectic toric orbifold that is denoted $F_4(0)$ and whose moment polytope is 
\begin{align*}
    \Delta_W=\left\{(x, y)\in \mathbb{R}^2\mid 0\le x\le 2, 0\le y\le \frac{1}{2}-\frac{1}{4}x\right\}
\end{align*}
with exactly one singular point sitting over the point $\left(0, \frac{1}{2}\right)\in\Delta_W$. Then by replacing a neighborhood of the singular point with a neighborhood of the zero section of the cotangent bundle $T^*\mathbb{R}P^2$, one obtains a manifold denoted $\widehat{F}_4(0)$ that is symplectomorphic to $\mathbb{CP}^2(\sqrt{2})$. The moment polytope of $\widehat{F}_4(0)$ is still $\Delta_W$ and we denote the moment map by 
\begin{align*}
    \mu_W: \widehat{F}_4(0)\to \Delta_W
\end{align*}
Then $\mu_W^{-1}\left(0, \frac{1}{2}\right)=\mathbb{R}P^2$. Let $(x, y)$ be an interior point in $\Delta_W$, then $\mu_W^{-1}(x, y)$ is a Lagrangian torus, denoted by $L(x, y)$.

In \cite{OakleyUsher}, J. Oakley and M. Usher gave an explicit symplectomorphism between $\widehat{F}_4(0)$ and $\mathbb{CP}^2(\sqrt{2})$. Under this symplectomorphism, the image of $L(x, y)$, still denoted by $L(x, y)$, is
\begin{align}\label{eq:fiber-equations-xy}
 L(x,y)=& \left\{[z_0: z_1: z_2]\in\mathbb{CP}^2(\sqrt{2})\mid \frac{1}{2}\sqrt{4-\left|\sum_{j=0}^2z_j^2\right|^2}+\im(\bar z_1z_2)=x, \frac{1}{2}-\frac{1}{4}\sqrt{4-\left|\sum_{j=0}^2z_j^2\right|^2}=y\right\}\\
 =&\left\{
 [z_0: z_1: z_2]\in \mathbb{CP}^2(\sqrt{2})\mid 
 |z_0^2+z_1^2+z_2^2|=4\sqrt{y(1-y)},\
 \im(\bar z_1z_2)=x+2y-1
 \right\}.
\end{align}

Then we change the coordinates by
\begin{equation}\label{eq:pq-def}
 p=x+2y-1,\qquad q=1-2y.
\end{equation}
Under the coordinates $(p, q)$, the moment polytope $\Delta_W$ becomes 
\begin{align*}
    \left\{(p, q)\in\mathbb{R}^2\mid -q\le p\le q, 0\le q\le 1\right\}
\end{align*}
and we still call it $\Delta_W$. The Lagrangian torus $L(x, y)$ can be written as 
\begin{align*}
    \widetilde{L}(p, q)=\left\{[z_0: z_1: z_2]\in\mathbb{CP}^2(\sqrt{2})\mid |z_0^2+z_1^2+z_2^2|=2\sqrt{1-q^2}, \im(\bar z_1z_2)=p\right\}
\end{align*}

\begin{figure}[h]
    \centering
    \includegraphics[scale=0.7]{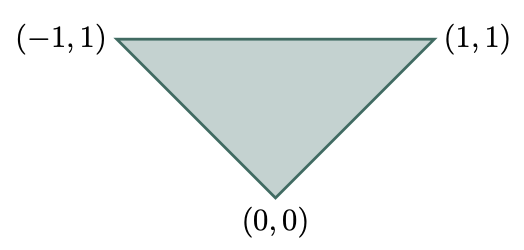}
    \caption{The moment polytope $\Delta_W$ under the $(p, q)$ coordinates}
\end{figure}



Consider the Darboux chart $(U_0, \psi^{-1})$ on $\mathbb{CP}^2(\sqrt{2})$ where 
\begin{align*}
    U_0=\left\{[z_0: z_1: z_2]\in\mathbb{CP}^2\mid z_0\ne 0\right\}
\end{align*}
and 
\begin{align*}
    \psi^{-1}: U_0 &\to B^4(\sqrt{2})\\
    [z_0: z_1: z_2] &\mapsto \left(\frac{z_1|z_0|}{z_0}, \frac{z_2|z_0|}{z_0}\right)
\end{align*}
Then by a routine computation we have
\begin{align*}
    \psi: B^4(\sqrt{2}) &\to \mathbb{CP}^2(\sqrt{2})\\
    (w_1, w_2) &\mapsto \left[\sqrt{2-|w_1|^2-|w_2|^2}: w_1: w_2\right]
\end{align*}

\begin{lemma}[Ball containment]\label{lem:ball-containment}
Every fiber \(\widetilde{L}(p, q)\) with \((p, q)\in\Int\Delta_W\) is contained in the
standard symplectic ball chart
\(\psi(B^4(\sqrt2))=\{z_0\neq0\}\).
\end{lemma}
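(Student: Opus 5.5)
The plan is to argue by contradiction on the line at infinity \(\{z_0=0\}\). I will show that a point of \(\widetilde L(p,q)\) with \(z_0=0\) forces \((p,q)\) onto the boundary of \(\Delta_W\). Throughout I represent points of \(\CP^2(\sqrt2)\) by vectors \((z_0,z_1,z_2)\in\mathbb C^3\) with \(|z_0|^2+|z_1|^2+|z_2|^2=2\), well defined up to the diagonal \(S^1\)-action. The defining functions of \(\widetilde L(p,q)\) are well defined on the quotient. Indeed, under \(z\mapsto e^{i\theta}z\) the sum \(\sum z_j^2\) is multiplied by \(e^{2i\theta}\), so its modulus is unchanged, and \(\bar z_1 z_2\) is invariant.

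Suppose \([0:z_1:z_2]\in\widetilde L(p,q)\), so that \(|z_1|^2+|z_2|^2=2\). The key step is the algebraic identity, valid for all \(z_1,z_2\in\mathbb C\),
\begin{equation*}
|z_1^2+z_2^2|^2+4\bigl(\im(\bar z_1z_2)\bigr)^2=\bigl(|z_1|^2+|z_2|^2\bigr)^2 .
\end{equation*}
To prove it, set \(w=\bar z_1 z_2\). Then \(|z_1^2+z_2^2|^2=|z_1|^4+|z_2|^4+2\operatorname{Re}(w^2)\). Also \(\operatorname{Re}(w^2)=(\operatorname{Re}w)^2-(\im w)^2\). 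Adding \(4(\im w)^2\) therefore gives \(|z_1|^4+|z_2|^4+2|w|^2\), and this equals \((|z_1|^2+|z_2|^2)^2\) because \(|w|^2=|z_1|^2|z_2|^2\).

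Now substitute the defining equations of \(\widetilde L(p,q)\), namely \(|z_1^2+z_2^2|=2\sqrt{1-q^2}\) and \(\im(\bar z_1z_2)=p\), together with the normalization \(|z_1|^2+|z_2|^2=2\). The identity becomes \(4(1-q^2)+4p^2=4\), that is, \(p^2=q^2\). But \((p,q)\in\Int\Delta_W\) means \(-q<p<q\), so \(|p|<q\) and hence \(p^2<q^2\). This is a contradiction, so \(z_0\neq 0\) on every interior fiber.

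The description \(\psi(B^4(\sqrt2))=\{z_0\neq0\}\) follows directly from the formula for \(\psi\) and its inverse given above, so this completes the argument. I expect no real obstacle. The only point needing care is the identity above, and checking that the normalization used in defining \(\widetilde L(p,q)\) is the radius-\(\sqrt2\) sphere, so that the right-hand side is exactly \(4\). As a byproduct, the computation shows that the line at infinity maps exactly onto the two edges \(|p|=q\) of \(\Delta_W\).
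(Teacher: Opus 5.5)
Your proposal is correct and follows essentially the same argument as the paper: assume \(z_0=0\), apply the identity \(|z_1^2+z_2^2|^2+4\im(\bar z_1z_2)^2=(|z_1|^2+|z_2|^2)^2=4\), deduce \(p^2=q^2\), and contradict \(|p|<q\). You also write out the verification of the identity and the \(S^1\)-invariance of the defining functions, which the paper leaves to ``direct expansion.''
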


\begin{proof}
Suppose a point of the fiber had \(z_0=0\).  Then
\(|z_1|^2+|z_2|^2=2\), and direct expansion gives
\begin{equation}\label{eq:boundary-identity}
 |z_1^2+z_2^2|^2+
 4\im(\bar z_1z_2)^2
 =
 (|z_1|^2+|z_2|^2)^2=4.
\end{equation}
Thus
\[
 4(1-q^2)+4p^2=4
\]
Thus \(p^2=q^2\), contradicting the strict interior condition
\(|p|<q\).
\end{proof}

Thus we have 
\begin{align*}
    L_1(p, q):&=\psi^{-1}\left(\widetilde{L}(p, q)\right)\\
    &=\left\{(w_1, w_2)\in B^4(\sqrt{2})\mid \left|2-|w_1|^2-|w_2|^2+w_1^2+w_2^2\right|=2\sqrt{1-q^2}, \im(\bar w_1w_2)=p\right\}
\end{align*}

Next we change the coordinates on $B^4(\sqrt{2})$ by
\begin{equation*}\label{eq:unitary-change}
 u_1=\frac{w_1-iw_2}{\sqrt2},\qquad
 u_2=\frac{w_1+iw_2}{\sqrt2}.
\end{equation*}
Then 
\begin{align*}
L_1(p, q)=\left\{(u_1, u_2)\in B^4(\sqrt{2})\mid \left|2-|u_1|^2-|u_2|^2+2u_1u_2\right|=2\sqrt{1-q^2}, \frac{|u_1|^2-|u_2|^2}{2}=p\right\}
\end{align*}


Now we use the method in \cite{EliashbergPolterovich} to describe $L_1(p, q)$ for $p\ne 0$. First we focus on the function
\begin{align*}
    G: B^4(\sqrt{2}) &\to \mathbb{R}\\
    (u_1, u_2) &\mapsto \frac{|u_1|^2-|u_2|^2}{2}
\end{align*}
Note that $L_1(p, q)\subset G^{-1}(p)$ and the Hamiltonian flow of $G$ gives following action $S^1$-action on $G^{-1}(p)$
\begin{align*}
    (e^{it}, e^{-it})\cdot (u_1, u_2)=(e^{it}u_1, e^{-it}u_2)
\end{align*}
As in \cite[Section 4.2]{EliashbergPolterovich}, this action is free on $B^4(\sqrt{2})\setminus\{0\}$. In particular it freely acts on $G^{-1}(p)$ for $p\ne 0$. By the Marsden-Weinstein-Meyer Theorem, there is a unique symplectic form $\omega_{\mathrm{red}}$ on $G^{-1}(p)/S^1$ such that 
\begin{align*}
    \iota^*\left(\frac{i}{2}du_1\wedge d\bar u_1+\frac{i}{2}du_2\wedge d\bar u_2\right)=\pi^*\omega_{\mathrm{red}}
\end{align*}
where $\iota: G^{-1}(p)\hookrightarrow B^4(\sqrt{2})$ is the inclusion and $\pi: G^{-1}(p)\to G^{-1}(p)/S^1$ is the projection.

\begin{proposition}
    For $p\ne 0$, there is a symplectomorphism 
    \begin{align*}
        W: \left(G^{-1}(p)/S^1, \omega_{\mathrm{red}}\right) &\to \left(D_p, \frac{1}{2\sqrt{p^2+x^2+y^2}}dx\wedge dy\right)\\
        [u_1, u_2] &\mapsto u_1u_2
    \end{align*}
    where $D_p=\left\{(x, y)\in\mathbb{R}^2\mid x^2+y^2<1-p^2\right\}$ and $[u_1, u_2]$ is the orbit of $(u_1, u_2)$ under the $S^1$-action.
\end{proposition}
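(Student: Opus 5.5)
The plan is to verify directly that $W([u_1,u_2])=u_1u_2$ is well defined, bijective onto $D_p$, smooth with smooth inverse, and that it pulls back the stated area form to $\omega_{\mathrm{red}}$.

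\emph{Well-definedness.} The action $(e^{it}u_1,e^{-it}u_2)$ preserves the product $u_1u_2$, so $W$ descends to the orbit space.

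\emph{Image.} Set $a=|u_1|^2$ and $b=|u_2|^2$. On $G^{-1}(p)$ we have $a-b=2p$, and $|u_1u_2|^2=ab$. Hence $a+b=2\sqrt{p^2+|u_1u_2|^2}$. The ball condition $a+b<2$ becomes $p^2+|u_1u_2|^2<1$, which says exactly that $u_1u_2\in D_p$.

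\emph{Bijectivity.} Given $w\in D_p$, the moduli $a$ and $b$ are forced to be $\sqrt{p^2+|w|^2}\pm p$. Since $p\neq0$, one of $a,b$ is strictly positive, and in fact both are once $w\neq0$. When $w=0$, one of $u_1,u_2$ vanishes, the other has prescribed modulus $\sqrt{2|p|}$, and its phase is absorbed by the action. When $w\neq 0$, $\arg u_1+\arg u_2=\arg w$ is fixed, and the remaining phase freedom is exactly one $S^1$-orbit. So $W$ is a bijection. It is smooth because it is induced by a smooth invariant map.

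For smoothness of the inverse, I would construct an explicit local section. Say $p>0$, so $a>0$ everywhere. Take $u_1=\sqrt{a(w)}$ real positive and $u_2=w/u_1$; this is smooth in $w$ on all of $D_p$. The case $p<0$ is symmetric, with $u_2$ real positive.

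\emph{Symplectic form.} This is the main computation. Since $\pi^*\omega_{\mathrm{red}}=\iota^*\omega_0$, it suffices to compute $\sigma^*\omega_0$ for the section $\sigma(w)=(\sqrt{a},\,w/\sqrt a)$, with $a=\sqrt{p^2+r^2}+p$ and $r=|w|$. Using polar coordinates $w=re^{i\theta}$, we have $\omega_0=\sum \tfrac12 d(|u_j|^2)\wedge d\arg u_j$. The first coordinate has constant argument, so it contributes nothing. The second has $\arg u_2=\theta$ and $|u_2|^2=b=\sqrt{p^2+r^2}-p$, giving
\[
\sigma^*\omega_0=\tfrac12\,db\wedge d\theta=\tfrac12\,\frac{r}{\sqrt{p^2+r^2}}\,dr\wedge d\theta=\frac{1}{2\sqrt{p^2+x^2+y^2}}\,dx\wedge dy.
\]
This is exactly the claimed form. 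Because $\sigma$ is a section of $\pi$ (a map with $\pi\circ\sigma=\mathrm{id}$), we get $\sigma^*\iota^*\omega_0=(W^{-1})^*\omega_{\mathrm{red}}$, which proves the claim.

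The only delicate point is the origin $w=0$, where polar coordinates break down. There the section is still smooth, and both sides are smooth $2$-forms that agree off a point, so they agree everywhere by continuity. The case $p<0$ is handled by the same computation using the section $u_2=\sqrt{b}>0$.
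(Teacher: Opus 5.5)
Your proof is correct and follows essentially the same direct-verification route as the paper: well-definedness, then image and bijectivity via $|u_1|^2\pm|u_2|^2$, then a polar-coordinate computation of the form. The only difference is technical: you pull the area form back along the explicit global section $\sigma$, which also gives smoothness of $W^{-1}$ and lets you handle $w=0$ explicitly, whereas the paper compares $\pi^*W^*$ of the area form with $\iota^*\omega_0=r_2\,dr_2\wedge d\theta_W$ on $G^{-1}(p)$ and concludes that the bijection $W$ is a diffeomorphism because it is an immersion between manifolds of equal dimension.
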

\begin{proof}
    Assume $[u_1, u_2]=[v_1, v_2]$. Then there is a $t$ such that $v_1=e^{it}u_1$ and $v_2=e^{-it}u_2$. We have 
    \begin{align*}
        v_1v_2=e^{it}u_1e^{-it}u_2=u_1u_2
    \end{align*}
    Thus the map $W$ is well-defined.
    
    First we show that $W$ is a bijection. Since $|u_1|^2-|u_2|^2=2p$ and $|u_1|^2+|u_2|^2<2$, then $|u_2|^2<1-p$. Furthermore,
    \begin{align*}
        |u_1u_2|=|u_2|\sqrt{2p+|u_2|^2}<\sqrt{1-p^2}
    \end{align*}
    Thus the image of the map $W$ is contained in $D_p$. 

    Assume $W(u_1, u_2)=W(v_1, v_2)$, that is $u_1u_2=v_1v_2$. We need to show that $(u_1, u_2)$ and $(v_1, v_2)$ are in the same $S^1$-orbit.

    \textbf{Case 1}: $u_1=0$ and $u_2\ne 0$
    
     Since $u_1u_2=v_1v_2$, one of $v_1$ and $v_2$ has to be zero. If $v_2=0$, then $-|u_2|^2=2p\le0$ and $|v_1|^2=2p\ge0$. Thus $p=0$, contradicting with the condition $p\ne 0$. Thus $v_1=0$. Then $|u_2|^2=|v_2|^2=-2p$. There is an element $e^{it}\in S^1$ such that $v_2=e^{it}u_2$, that is 
     \begin{align*}
         (e^{-it}, e^{it})\cdot (0, u_2)=(0, v_2)
     \end{align*}

     \textbf{Case 2}: $u_1\ne 0$ and $u_2=0$

     The proof for Case 2 is the same with that for Case 1.

     \textbf{Case 3}: $u_1\ne 0$ and $u_2\ne 0$

     Using polar coordinates, $u_j=\sqrt{\xi_j}e^{i\theta_j}$ and $v_j=\sqrt{\zeta_j}e^{i\tau_j}$ for $j=1, 2$. Since $u_1u_2=v_1v_2$, then 
     \begin{align*}
     \sqrt{\xi_1\xi_2}e^{i(\theta_1+\theta_2)}=\sqrt{\zeta_1\zeta_2}e^{i(\tau_1+\tau_2)}
     \end{align*}
     Thus $\xi_1\xi_2=\zeta_1\zeta_2$ and $\theta_1+\theta_2=\tau_1+\tau_2$ modulo $2\pi$. Since $\xi_1=2p+\xi_2$ and $\zeta_1=2p+\zeta_2$, then 
     \begin{align*}
         \xi_2(2p+\xi_2)=\zeta_2(2p+\zeta_2)
     \end{align*}
     that is 
     \begin{align*}
         (\xi_2-\zeta_2)(\xi_2+\zeta_2+2p)=0
     \end{align*}
     If $\xi_2+\zeta_2+2p=0$, then $\xi_1+\zeta_2=0$ since $\xi_1-\xi_2=2p$. Since $\xi_1\ge 0$ and $\zeta_2\ge 0$, then $\xi_1=\zeta_2=0$, i.e. $u_1=v_2=0$, which is not possible as in Case 1.

    Thus $\xi_2-\zeta_2=0$. Furthermore, $\xi_1=\zeta_1$.

    Since $\theta_1+\theta_2=\tau_1+\tau_2$ $2\pi$, let $\theta=\tau_1-\theta_1=\theta_2-\tau_2$. Thus
    \begin{align*}
        (e^{i\theta}, e^{-i\theta})\cdot (\sqrt{\xi_1}e^{i\theta_1}, \sqrt{\xi_2}e^{i\theta_2})=(\sqrt{\zeta_1}e^{i\tau_1}, \sqrt{\zeta_2}e^{i\tau_2})
    \end{align*}

    Now we show that the map $W$ is surjective. Given $w\in D_p$ and $w\ne 0$, denote $w=re^{i\theta}$ using the polar coordinates. Then 
    \begin{align*}
        W\left(\left[\sqrt{p+\sqrt{p^2+r^2}}e^{i\theta}, \sqrt{\sqrt{p^2+r^2}-p}\right]\right)=w
    \end{align*}
    
    If $w=0$ and $p>0$, then $W([u_1, 0)]=0$ for some $u_1$ with $|u_1|^2=2p$. If $w=0$ and $p<0$, then $W([0, u_2])=0$ for some $u_2$ with $|u_2|^2=-2p$.

    Next we show 
    \begin{align*}
        W^*\left(\frac{1}{2\sqrt{p^2+x^2+y^2}}dx\wedge dy\right)=\omega_{\mathrm{red}}
    \end{align*}
    Since $\omega_{\mathrm{red}}$ is unique, we only need to show 
    \begin{align*}
        \pi^*W^*\left(\frac{1}{2\sqrt{p^2+x^2+y^2}}dx\wedge dy\right)=\pi^*\omega_{\mathrm{red}}
    \end{align*}
    Using polar coordinates, we write $u_1=r_1e^{i\theta_1}$ and $u_2=r_2e^{i\theta_2}$. We change the coordinates $(r_1, \theta_1, r_2, \theta_2)$ to $(r_1, \theta_1, r_2, \theta_W)$ where $\theta_W=\theta_1+\theta_2$. Then the standard symplectic form on $B^4(\sqrt{2})$ is 
    \begin{align*}
        \omega_0&=r_1dr_1\wedge d\theta_1+r_2dr_2\wedge d\theta_2\\
        &= d\left(\frac{r_1^2-r_2^2}{2}\right)\wedge d\theta_1+r_2d r_2\wedge d\theta_W
    \end{align*}
    Since $\displaystyle\frac{r_1^2-r_2^2}{2}=p$, then  $\pi^*\omega_{\mathrm{red}}=\iota^*\omega_0=r_2dr_2\wedge d\theta_W$

    On the other hand we have 
    \begin{align}\label{1}
        \pi^*W^*\left(\frac{1}{2\sqrt{p^2+x^2+y^2}}dx\wedge dy\right)=\frac{r_1r_2^2}{2\sqrt{p^2+r_1^2r_2^2}}dr_1\wedge d\theta_W+\frac{r_1^2r_2}{2\sqrt{p^2+r_1^2r_2^2}}dr_2\wedge d\theta_W
    \end{align}
    Since $r_1^2-r_2^2=2p$, 
    \begin{align*}
        dr_1=\frac{r_2}{\sqrt{2p+r_2^2}}dr_2
    \end{align*}
    Then we can simplify Equation \eqref{1} to
    \begin{align*}
        \pi^*W^*\left(\frac{1}{2\sqrt{p^2+x^2+y^2}}dx\wedge dy\right)=r_2dr_2\wedge d\theta_W
    \end{align*}

    Since $W^*\left(\frac{1}{2\sqrt{p^2+x^2+y^2}}dx\wedge dy\right)=\omega_{\mathrm{red}}$, $W$ is an immersion. Since $\dim G^{-1}(p)/S^1=\dim D_p$, $W$ is an submersion. Thus $W$ is a symplectomorphism.
\end{proof}

Since the $S^1$-action acts freely on $L_1(p, q)$ for $p\ne 0$, then $\pi_p(L_1(p, q))$ is an embedded curve in $D_p$ where $\pi_p:=W\circ\pi: G^{-1}(p)\to D_p$, denoted by $\Gamma_{p, q}$. For $(u_1, u_2)\in L_1(p, q)$, let $w=u_1u_2\in D_p$ . Since $|u_1|^2-|u_2|^2=2p$, then 
\begin{align*}
    |u_1|^2+|u_2|^2=2\sqrt{p^2+|w|^2}
\end{align*}
Thus 
\begin{align*}
\Gamma_{p, q}=\{w\in D_p\mid |1-\sqrt{p^2+|w|^2}+w|=\sqrt{1-q^2}\}
\end{align*}
and 
\begin{align*}
    \pi_p^{-1}(\Gamma_{p, q})=L_1(p, q)
\end{align*}

\begin{lemma}
    For $p\ne 0$, the area enclosed by the curve $\Gamma_{p, q}$ in $\left(D_p, \displaystyle\frac{1}{2\sqrt{p^2+x^2+y^2}}dx\wedge dy\right)$ is $\pi(1-q)$.
\end{lemma}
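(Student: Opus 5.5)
The plan is to rewrite the reduced area form in coordinates where it becomes a constant multiple of \(d\rho\wedge d\theta\). Then Stokes' theorem turns the area into a contour integral, and the explicit form of \(\Gamma_{p,q}\) lets me parametrize it rationally.

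\textbf{Step 1: the area form.} Write \(w=re^{i\theta}\) in polar coordinates and set \(\rho=\sqrt{p^2+r^2}\); by the preceding discussion, \(\rho=(|u_1|^2+|u_2|^2)/2\). Since \(d\rho=r\,dr/\rho\), we get
\[
\frac{1}{2\sqrt{p^2+x^2+y^2}}\,dx\wedge dy=\frac{r}{2\rho}\,dr\wedge d\theta=\frac12\,d\rho\wedge d\theta .
\]
The \(1\)-form \(\alpha=\tfrac12(\rho-|p|)\,d\theta\) is smooth on all of \(D_p\), including the origin. Indeed, \(\rho-|p|\) is a smooth function of \(r^2\) vanishing at \(r=0\) to first order in \(r^2\), and \(r^2\,d\theta=x\,dy-y\,dx\). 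Moreover \(d\alpha\) is the area form. So Stokes' theorem gives the enclosed area as \(\oint_{\Gamma_{p,q}}\alpha\), with \(\Gamma_{p,q}\) oriented as a boundary. This holds regardless of whether \(\Gamma_{p,q}\) winds around \(0\), which avoids splitting into cases according to whether \(0\) lies in the enclosed region.

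\textbf{Step 2: rational parametrization.} Put \(s=\sqrt{1-q^2}\) and \(\zeta=1-\rho+w\), so that \(\Gamma_{p,q}=\{|\zeta|=s\}\); write \(\zeta=se^{i\varphi}\). Squaring \(\rho^2=p^2+|w|^2=p^2+|se^{i\varphi}+\rho-1|^2\) kills the \(\rho^2\) terms and leaves a linear equation for \(\rho\). Its solution is
\[
\rho-1=\frac{p^2-q^2}{2(1-s\cos\varphi)},\qquad w=se^{i\varphi}-\frac{c}{1-s\cos\varphi},\quad c=\frac{q^2-p^2}{2}>0 .
\]
Hence \(\varphi\mapsto w(\varphi)\) is a smooth embedded parametrization of \(\Gamma_{p,q}\) with one-to-one correspondence, so \(\Gamma_{p,q}\) is a single circle. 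I will check the orientation, i.e. that increasing \(\varphi\) traverses the boundary counterclockwise, at a convenient point such as \(\varphi=0\).

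\textbf{Step 3: evaluate the integral.} Since \(\oint|p|\,d\theta=2\pi|p|\cdot\mathrm{wind}(\Gamma_{p,q},0)\), it remains to compute
\[
\oint\rho\,d\theta=\oint\rho\,\im\frac{dw}{w}
\]
and the winding number. Substituting \(z=e^{i\varphi}\), \(\cos\varphi=(z+z^{-1})/2\), makes \(\rho\,dw/w\) a rational \(1\)-form on the unit circle. I would compute it by residues: the poles come from \(1-s\cos\varphi=0\), which has one root inside the unit disk since \(s<1\), from \(z=0\), and from the zeros of \(w\). The zeros of \(w\) inside the disk are exactly what produces the winding contribution, and it should cancel against the \(-|p|\) term.

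Alternatively, integrate by parts to \(-\tfrac12\oint(\theta-\theta_0)\,d\rho\) on a branch. The target is \(\tfrac12\oint(\rho-|p|)\,d\theta=\pi(1-q)\). As a consistency check, at \(p\to0\) the curve should have area tending to \(\pi(1-q)\); and since \(\pi(1-q)=2\pi y\) in the original coordinates, this matches the action coordinate \(y\) of \(\mu_W\), which I will use to cross-check the sign conventions.

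\textbf{Expected obstacle.} The main difficulty is Step 3: organizing the residue computation so that the \(p\)-dependence visibly cancels. I would try first to locate the zeros of \(w(z)\) in the unit disk. Clearing denominators, \(w=0\) becomes a quadratic in \(z\) with coefficients depending only on \(s\) and \(c\), so its roots can be written explicitly. I would then evaluate the residues at these zeros, at \(z=0\), and at \(z_s=(1-\sqrt{1-s^2})/s=(1-q)/s\). The appearance of \(1-q\) in \(z_s\) strongly suggests that this pole contributes the final answer \(\pi(1-q)\), and that the remaining residues cancel with \(-|p|\cdot2\pi\,\mathrm{wind}\).
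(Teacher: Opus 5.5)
Your route is correct in outline and genuinely different from the paper's. However, Step 3 contains the whole content of the lemma, and you leave it as a forecast. It does close, but not quite as you predict.

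With $g(z)=sz^2-2z+s$ you have $\rho=1+2cz/g$ and $w=z\bigl(s+2c/g\bigr)$. So the zeros of $w$ are $0$ and $(1\pm|p|)/s$, and its finite poles are $(1\pm q)/s$. In the unit disk, $\rho\,w'/w$ has three relevant residues:
\begin{itemize}
\item residue $\rho(0)=1$ at $z=0$;
\item residue $-q$ at the double pole $z_s=(1-q)/s$, where two contributions $\pm c/q$ cancel and what remains is $-1+sz_s$;
\item residue $\rho(z_-)=1-sz_-=|p|$ at $z_-=(1-|p|)/s$, present exactly when $p^2+q^2<2|p|$.
\end{itemize}
Hence $\oint\rho\,d\theta=2\pi\bigl(1-q+|p|\,\mathrm{wind}\bigr)$, with $\mathrm{wind}=1$ precisely in that last case. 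The $|p|$ terms cancel and the area is $\pi(1-q)$. So the $1-q$ comes from $z=0$ and $z_s$ together, not from $z_s$ alone.

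Two loose ends remain. First, when $p^2+q^2=2|p|$ the curve passes through $w=0$. There you may not split the smooth form $\alpha$ into $\tfrac12\rho\,d\theta$ and $-\tfrac12|p|\,d\theta$, so you need a continuity argument in $(p,q)$. Second, a single-point orientation check needs justification. It follows from $\im w=s\sin\varphi$: the curve meets the real axis only at $\varphi=0,\pi$, and the upper arc corresponds to $\varphi\in(0,\pi)$.

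The paper avoids all of this by using your $\zeta$ as a coordinate on $D_p$ rather than as a parametrization of $\Gamma_{p,q}$. The map $(x,y)\mapsto(\tilde x,\tilde y)=(1-\rho+x,\,y)$ is an orientation-preserving diffeomorphism of $D_p$ onto itself. Its Jacobian is $1-x/\rho>0$, and for fixed $y$ it is increasing in $x$ and agrees with $x$ where $\rho=1$. Since $\rho-x=1-\tilde x$, it pulls the area form back to $\frac{d\tilde x\wedge d\tilde y}{2(1-\tilde x)}$, and it sends $\Gamma_{p,q}$ to the round circle $\tilde x^2+\tilde y^2=1-q^2$. Both the form and the curve are then independent of $p$. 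One polar integral, using $\int_0^{2\pi}\frac{d\theta}{1-r\cos\theta}=\frac{2\pi}{\sqrt{1-r^2}}$, gives $\pi\int_0^{s}\frac{r\,dr}{\sqrt{1-r^2}}=\pi(1-q)$.

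The paper's route thus gets the $p$-independence for free and needs no primitive, winding number, orientation check, or case split at the origin. Yours produces an explicit rational parametrization and a residue identity, which is noticeably more work for the same conclusion.
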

\begin{proof}
    Using the real coordinates, $w=x+iy$. Let 
    \begin{align*}
        \widetilde{x}=1-\sqrt{p^2+x^2+y^2}+x, \quad \widetilde{y}=y
    \end{align*}
    Then 
    \begin{align*}
        \frac{1}{2\sqrt{p^2+x^2+y^2}}dx\wedge dy=\frac{1}{2(1-\widetilde{x})}d\widetilde{x}\wedge d\widetilde{y}
    \end{align*}
    and the curve $\Gamma_{p, q}$ is 
    \begin{align*}
        \Gamma_{p, q}=\{(\widetilde{x}, \widetilde{y})\in\mathbb{R}\mid \widetilde{x}^2+\widetilde{y}^2=1-q^2\}
    \end{align*}
    By polar coordinates $\widetilde{x}=r\cos(\theta)$ and $\widetilde{y}=r\sin(\theta)$, the area enclosed by $\Gamma_{p, q}$ is 
    \begin{align*}
        \int_0^{2\pi}\int_0^{\sqrt{1-q^2}}\frac{r}{2(1-r\cos(\theta))}drd\theta=\int_0^{\sqrt{1-q^2}}\frac{\pi r}{\sqrt{1-r^2}}dr=\pi(1-q)
    \end{align*}
\end{proof}

\begin{theorem}\label{main1}
    The Lagrangian torus $L(x, y)$ is Hamiltonian isotopic to the toric fiber 
    \begin{align*}
        \begin{cases}
            T\left(x+3y-1, y\right) & \mathrm{for}\:x+2y>1\\
            T\left(y, 1-x-y\right) & \mathrm{for}\: x+2y<1
        \end{cases}
    \end{align*}
\end{theorem}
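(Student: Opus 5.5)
Since $L(x,y)$ corresponds to $\widetilde L(p,q)$ with $p=x+2y-1$, $q=1-2y$, the two cases are $p>0$ and $p<0$. I will work throughout in the chart $\psi$ of Lemma~\ref{lem:ball-containment}, using the $(u_1,u_2)$ coordinates. The plan is to use the reduction by $G$ to deform $L_1(p,q)=\pi_p^{-1}(\Gamma_{p,q})$ into the preimage of a round circle centered at $0\in D_p$. The preimage of such a circle is a product torus in the $u$-coordinates.

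\textbf{Step 1: find the target circle.} The total area of $(D_p,\frac{1}{2\sqrt{p^2+x^2+y^2}}dx\wedge dy)$ is
\[
\int_0^{2\pi}\!\!\int_0^{\sqrt{1-p^2}}\frac{r\,dr\,d\theta}{2\sqrt{p^2+r^2}}=\pi(1-|p|).
\]
For $0<\rho<\sqrt{1-p^2}$, the disk $\{|w|<\rho\}$ has area $\pi(\sqrt{p^2+\rho^2}-|p|)$. By the preceding lemma, $\Gamma_{p,q}$ encloses area $\pi(1-q)$. Since $|p|<q$, we have $\pi(1-q)<\pi(1-|p|)$, so we can choose $\rho$ with $\sqrt{p^2+\rho^2}=1-q+|p|$, and the round circle $C_\rho=\{|w|=\rho\}$ encloses the same area as $\Gamma_{p,q}$.

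\textbf{Step 2: isotope $\Gamma_{p,q}$ to $C_\rho$.} I need that $\Gamma_{p,q}$ is an embedded circle compactly contained in $D_p$ and bounding a disk there. In the coordinates $(\widetilde x,\widetilde y)$ of the area computation, $\Gamma_{p,q}$ is a round circle. Compactness in $D_p$ follows from Lemma~\ref{lem:ball-containment} and the compactness of $\widetilde L(p,q)$. The key step is then the standard fact that, in an open $2$-disk with an area form, two embedded circles bounding disks of equal area are related by a compactly supported Hamiltonian isotopy. To prove it, I would first isotope $\Gamma_{p,q}$ smoothly to $C_\rho$ through embedded circles. Then I would use a Moser-type correction, with compact support, so that the enclosed area stays $\pi(1-q)$ along the family. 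A family of embedded circles with constant enclosed area in a simply connected surface is generated by a compactly supported Hamiltonian isotopy, since the flux vanishes. This is where I expect the only real care to be needed.

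\textbf{Step 3: lift and identify the product torus.} By Lemma~\ref{lem:lifting-reduced-isotopy} applied to $G$ on $B^4(\sqrt2)$, with $\omega_{\mathrm{red}}$ transported by $W$, the torus $L_1(p,q)$ is Hamiltonian isotopic to $\pi_p^{-1}(C_\rho)$. This isotopy is compactly supported in the ball, so it extends by the identity to $\CP^2(\sqrt2)$. Next, $\pi_p^{-1}(C_\rho)$ is cut out by $|u_1|^2-|u_2|^2=2p$ and $|u_1||u_2|=\rho$. Hence
\[
|u_1|^2+|u_2|^2=2\sqrt{p^2+\rho^2}=2(1-q+|p|),
\]
and therefore
\[
|u_1|^2=1-q+|p|+p,\qquad |u_2|^2=1-q+|p|-p.
\]
So $\pi_p^{-1}(C_\rho)$ is the product torus $\{|u_1|^2=\mathrm{const},\ |u_2|^2=\mathrm{const}\}$.

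\textbf{Step 4: return to $w$-coordinates.} The change $(w_1,w_2)\mapsto(u_1,u_2)$ is a unitary map of $\mathbb C^2$. Through $\psi$, it is the restriction of an element of $PU(3)$ fixing $[1:0:0]$. The group $PU(3)$ is connected and acts by Hamiltonian diffeomorphisms, as in Remark~\ref{rem:independence-identification}. Hence the product torus in $u$-coordinates is Hamiltonian isotopic to the same-radii torus in $w$-coordinates. Under $\psi$, the latter is the toric fiber $T\bigl(\tfrac12|u_1|^2,\tfrac12|u_2|^2\bigr)$.

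\textbf{Step 5: substitute back.} We have $(1-q)/2=y$.
\begin{itemize}
\item For $p>0$: the fiber is $T\bigl(y+p,\,y\bigr)=T(x+3y-1,\,y)$.
\item For $p<0$: the fiber is $T\bigl(y,\,y-p\bigr)=T(y,\,1-x-y)$.
\end{itemize}
This matches the statement of Theorem~\ref{main1}.
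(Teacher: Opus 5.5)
Your proposal is correct and follows essentially the same route as the paper. You match the enclosed area $\pi(1-q)$ of $\Gamma_{p,q}$ with the round circle of radius $\sqrt{(1-q)(1-q+2|p|)}$ in $D_p$, lift the compactly supported isotopy via Lemma~\ref{lem:lifting-reduced-isotopy}, and identify the preimage of the round circle with a standard toric fiber through the unitary coordinate change in $U(3)$. The only difference is that you sketch why equal-area embedded circles in $D_p$ are Hamiltonian isotopic (smooth isotopy, Moser correction, vanishing flux), whereas the paper simply asserts this fact.
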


\begin{proof}
    First we show that $\psi\left((W\circ\pi)^{-1}(S^1(r))\right)$ is Hamiltonian isotopic to a standard toric fiber in $\mathbb{CP}^2(\sqrt{2})$ where $S^1(r)$ is the circle in $D_p$ centered at $(0, 0)$ with radius $r$. Assume $(u_1, u_2)\in (W\circ\pi)^{-1}(S^1(r))$. Then 
    \begin{align*}
        \frac{|u_1|^2-|u_2|^2}{2}=p, \quad |u_1|^2|u_2|^2=r^2
    \end{align*}
    Thus 
    \begin{align*}
        (W\circ\pi)^{-1}(S^1(r))=\left\{\left(\sqrt{p+\sqrt{p^2+r^2}}e^{i\theta_1}, \sqrt{\sqrt{p^2+r^2}-p}e^{i\theta_2}\right)\in B^4(\sqrt{2})\mid 0\le \theta_1\le 2\pi, 0\le \theta_2\le 2\pi\right\}
    \end{align*}
    After changing the coordinates back to $(w_1, w_2)$ on $B^4(\sqrt{2})$, we have 
    {\scriptsize \begin{align*}
        (W\circ\pi)^{-1}(S^1(r))=\left\{\left(\frac{\sqrt{p+\sqrt{p^2+r^2}}e^{i\theta_1}+\sqrt{\sqrt{p^2+r^2}-p}e^{i\theta_2}}{\sqrt{2}}, \frac{\sqrt{\sqrt{p^2+r^2}-p}e^{i\theta_2}-\sqrt{p+\sqrt{p^2+r^2}}e^{i\theta_1}}{\sqrt{2}i}\right)\in B^4(\sqrt{2})\right\}
    \end{align*}}
    Thus 
    {\scriptsize \begin{align*}
        &\psi\left((W\circ\pi)^{-1}(S^1(r))\right)\\
        =&\left\{\left[\sqrt{2-2\sqrt{p^2+r^2}}: \frac{\sqrt{p+\sqrt{p^2+r^2}}e^{i\theta_1}+\sqrt{\sqrt{p^2+r^2}-p}e^{i\theta_2}}{\sqrt{2}} : \frac{\sqrt{\sqrt{p^2+r^2}-p}e^{i\theta_2}-\sqrt{p+\sqrt{p^2+r^2}}e^{i\theta_1}}{\sqrt{2}i}\right]\in\mathbb{CP}^2(\sqrt{2})\right\}
    \end{align*}}
    Since the matrix
    \begin{align*}
        \begin{bmatrix}
            1 & 0 & 0\\
            0 & \frac{1}{\sqrt{2}} & \frac{-i}{\sqrt{2}}\\
            0 & \frac{1}{\sqrt{2}} & \frac{i}{\sqrt{2}}
        \end{bmatrix}\in U(3)
    \end{align*}
    is a Hamiltonian diffeomorphism on $\mathbb{CP}^2(\sqrt{2})$, then $\psi\left((W\circ\pi)^{-1}(S^1(r))\right)$ is Hamiltonian isotopic to 
    \begin{align*}
        \left\{\left[\sqrt{2-2\sqrt{p^2+r^2}}: \sqrt{p+\sqrt{p^2+r^2}}e^{i\theta_1}: \sqrt{\sqrt{p^2+r^2}-p}e^{i\theta_2}\right]\in \mathbb{CP}^2\right\}
    \end{align*}
    which is the standard toric fiber 
    \begin{align*}
    T\left(\frac{p+\sqrt{p^2+r^2}}{2}, \frac{\sqrt{p^2+r^2}-p}{2}\right) 
    \end{align*}
    corresponding to the point 
    \begin{align*}
    \left(p+\sqrt{p^2+r^2}, \sqrt{p^2+r^2}-p\right) 
    \end{align*}
    in the moment polytope. The area of $S^1(r)$ under the symplectic form $\displaystyle\frac{1}{2\sqrt{p^2+x^2+y^2}}dx\wedge dy$ is 
    \begin{align*}
        \pi\left(\sqrt{p^2+r^2}-|p|\right)
    \end{align*}
    Thus when 
    \begin{align*}
    r=\sqrt{(1-q)(1-q+2|p|)}
    \end{align*}
    $S^1(r)$ and $\Gamma_{p, q}$ have the same area. Then $S^1(r)$ and $\Gamma_{p, q}$ are Hamiltonian isotopic in $D_p$ for $r=\sqrt{(1-q)(1-q+2|p|)}$. By Lemma \ref{lem:lifting-reduced-isotopy}, $L_1(p, q)$ and $(W\circ\pi)^{-1}(S^1(r))$ are Hamiltonian isotopic in $B^4(\sqrt{2})$. Since $\psi$ is a symplectomorphism, $\widetilde{L}(p, q)$ and $\psi\left((W\circ\pi)^{-1}(S^1(r))\right)=T\left(\left(p+\sqrt{p^2+r^2}\right)/2, \left(\sqrt{p^2+r^2}-p\right)/2\right)$ are Hamiltonian isotopic in $\psi(B^4(\sqrt{2}))$. Since the Hamiltonian isotopy from Lemma \ref{lem:lifting-reduced-isotopy} in $B^4(\sqrt{2})$ is locally supported, we can trivially extend the Hamiltonian isotopy in $\psi(B^4\sqrt{2})$ to the entire $\mathbb{CP}^2$.

    Since $r=\sqrt{(1-q)(1-q+2|p|)}$, $\widetilde{L}(p, q)$ is Hamiltonian isotopic to toric fiber 
    \begin{align*}
    T\left(\frac{1-q+|p|+p}{2}, \frac{1-q+|p|-p}{2}\right)
    \end{align*}

    \begin{figure}[h]
    \centering
    \includegraphics[scale=0.7]{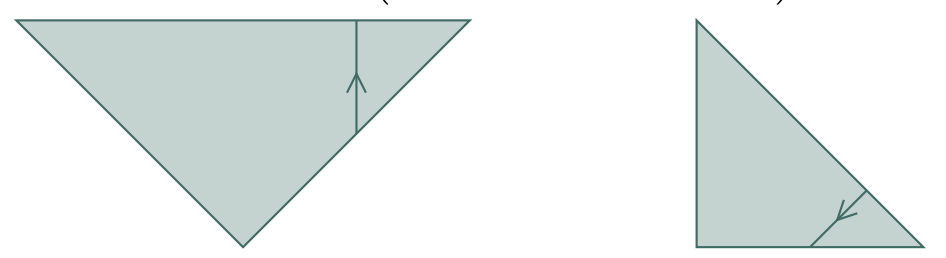}
    \caption{An illustration of the Hamiltonian isotopic Lagrangian tori. Take $p=p_0>0$. Then the fibers over the line $p=p_0$ are Hamiltonian isotopic to those over the line $x-y=p_0$.}
\end{figure}

    Changing the variables back to $x$ and $y$, we have $L(x, y)$ is Hamiltonian isotopic to the toric fiber
    \begin{align*}
        \begin{cases}
            T\left(x+3y-1, y\right) & \mathrm{for}\:x+2y>1\\
            T\left(y, 1-x-y\right) & \mathrm{for}\: x+2y<1
        \end{cases}
    \end{align*}
 
\end{proof}

\begin{remark}
    In the $S^2\times S^2$ case \cite{Lou}, the Lagrangian torus lifting the standard circle is exactly the toric fiber. The situation in the proof of Theorem \ref{main1} is slightly different. Namely the standard circle $S^1(r)$ centered at $(0, 0)$ with radius $r$ does not lift to a Lagrangian torus that is exactly a torus fiber for the moment map defined using the $w$-coordinates. It is instead Hamiltonian isotopic (via an unitary transformation) to a standard toric fiber. Indeed, the pullback of the standard circle is a torus fiber for the moment map defined using the $u$-coordinates, which is related to the $w$-coordinate by a unitary transform.
\end{remark}

%

\section{The wall fibers}
\label{sec:wall-fibers}

The goal of this section is to prove that $L(x, y)$ for $x+2y=1$ is not Hamiltonian isotopic to a product
torus and that distinct wall fibers are pairwise non-Hamiltonian-isotopic. Recall that after changing coordinates $L(x, y)$ for $x+2y=1$ can be written as $\widetilde{L}(0, q)$.

The proof below uses directly the Weinstein-neighborhood argument from
\cite[Section~3]{Lou}.

\begin{proposition}[The unique candidate]
\label{prop:unique-toric-candidate}
If $\widetilde{L}(0, q)$ is Hamiltonian isotopic to a standard toric fiber, then it is Hamiltonian isotopic
to $T\left(\frac{1-q}{2},\frac{1-q}{2}\right)$.
\end{proposition}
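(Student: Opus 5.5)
The plan is to argue by continuity, passing from the wall fiber to nearby off-wall fibers. For those, \Cref{main1} gives explicit toric partners, and \Cref{thm:Brendel-classification} converts Hamiltonian isotopy between toric fibers into equality of facet-distance multisets. Suppose \(\phi\) is a Hamiltonian diffeomorphism with \(\phi(\widetilde L(0,q))=T(a,b)\) for some \((a,b)\in\Int\Delta_{\std}\). The goal is to show
\[
\multiset{a,b,1-a-b}=\multiset{\tfrac{1-q}{2},\tfrac{1-q}{2},q}.
\]
\Cref{thm:Brendel-classification} then gives \(T(a,b)\) Hamiltonian isotopic to \(T\bigl(\tfrac{1-q}{2},\tfrac{1-q}{2}\bigr)\), and hence so is \(\widetilde L(0,q)\).

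\textbf{Step 1 (nearby fibers as graphs).} Since \((0,q)\in\Int\Delta_W\) is a regular value of the moment map \(\mu_W\) (recall \(\mu_W^{-1}(0,\tfrac12)=\mathbb{R}P^2\) sits over a boundary point), the Arnold--Liouville theorem gives action-angle coordinates on a neighborhood of \(\widetilde L(0,q)\). In these coordinates the nearby fibers \(\widetilde L(p,q')\) are graphs of constant (hence closed) \(1\)-forms on \(\widetilde L(0,q)\). Their cohomology classes depend affinely and injectively on \((p,q')-(0,q)\) and tend to \(0\). Likewise the standard toric action-angle coordinates near \(T(a,b)\) give a Weinstein neighborhood in which the toric fibers \(T(a',b')\) are the graphs of constant forms of class \((a'-a,b'-b)\).

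\textbf{Step 2 (transport and straightening).} Let \(\widetilde L(p_n,q_n)\to\widetilde L(0,q)\) with \(p_n\neq0\). The images \(\phi(\widetilde L(p_n,q_n))\) are \(C^1\)-close to \(T(a,b)\), so for large \(n\) they are graphs of closed forms \(\beta_n\) in the toric Weinstein neighborhood, with \([\beta_n]\to0\). A graph of a closed form \(\beta\) is Hamiltonian isotopic, inside \(T^*T^2\), to the graph of the constant form in the class \([\beta]\), by the exact-form isotopy \(\beta-t\,dh\). The latter graph is the toric fiber \(T((a,b)+[\beta_n])\) for the appropriate identification. This cutoff isotopy extends by zero to \(\CP^2(\sqrt2)\). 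Hence \(\widetilde L(p_n,q_n)\) is Hamiltonian isotopic to \(T(a_n,b_n)\) with \((a_n,b_n)\to(a,b)\).

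\textbf{Step 3 (compare multisets and take the limit).} By \Cref{main1}, \(\widetilde L(p_n,q_n)\) is Hamiltonian isotopic to
\[
T\Bigl(\tfrac{1-q_n+|p_n|+p_n}{2},\tfrac{1-q_n+|p_n|-p_n}{2}\Bigr).
\]
This fiber has facet-distance multiset
\[
\multiset{\tfrac{1-q_n}{2}+\max(p_n,0),\ \tfrac{1-q_n}{2}-\min(p_n,0),\ q_n-|p_n|}.
\]
By \Cref{thm:Brendel-classification} this multiset equals \(\multiset{a_n,b_n,1-a_n-b_n}\). Letting \(n\to\infty\) and using that the sorted multiset depends continuously on the point, we obtain
\[
\multiset{a,b,1-a-b}=\multiset{\tfrac{1-q}{2},\tfrac{1-q}{2},q},
\]
which concludes the argument.

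The main obstacle is Step 2. One must make precise that a \(C^1\)-small Lagrangian perturbation of a toric fiber, namely a graph of a closed form in the toric Weinstein chart, is Hamiltonian isotopic in \(\CP^2(\sqrt2)\) to the toric fiber determined by its flux class. One must also check that \(\phi(\widetilde L(p_n,q_n))\) really lies in such a chart as a graph. I would handle this by taking the Weinstein chart of \(T(a,b)\) to be the toric action-angle chart itself, using that \(\phi\) is a diffeomorphism so that nearby fibers map to \(C^1\)-nearby tori. I would then use the standard fact, as in \cite[Section~3]{Lou}, that within \(T^*T^2\) any two graphs of closed forms in the same class differ by a compactly supported Hamiltonian isotopy.
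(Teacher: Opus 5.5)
Your proposal is correct and follows the paper's proof essentially step for step. You push nearby off-wall fibers through \(\phi\) into a Weinstein neighborhood of \(T(a,b)\), straighten the resulting closed-form graphs by an exact isotopy to nearby toric fibers \(T(a_n,b_n)\), equate facet-distance multisets using \Cref{main1} and \Cref{thm:Brendel-classification}, and pass to the limit; if anything you are more careful than the paper, since you make explicit the \(C^1\)-convergence of nearby fibers (via action--angle coordinates) and the convergence \((a_n,b_n)\to(a,b)\) that the paper's ``take \(\varepsilon\to0\)'' step uses only implicitly.
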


\begin{proof}
Suppose that
\[
\phi(\widetilde{L}(0, q))=T(A,B)
\]
for some $\phi\in\operatorname{Ham}(M,\omega_M)$. By Weinstein's Lagrangian neighborhood theorem, there is a symplectomorphism from a neighborhood $U$ of $T(A, B)$ to a neighborhood of the zero section of $T^*T(A, B)$ which takes a Lagrangian torus $C^1$-close to $T(A, B)$ to the image of a closed $1$-form in $T^*T(A, B)$.
For $\varepsilon>0$ sufficiently small, then $\phi(\widetilde{L}(\varepsilon, q))\subset U$ is $C^1$-close to $T(A, B)$. Let $\lambda$ be the $1$-form corresponding to $\phi(\widetilde{L}(\varepsilon, q))$ under the Weinstein's Lagrangian neighborhood theorem. Then there is a standard toric fiber $T(A^\prime, B^\prime)$ such that $[\lambda_{A^\prime, B^\prime}]=[\lambda]\in H^1(T(A, B); \mathbb{R})$ where $(A^\prime, B^\prime)$ is close enough to $T(A, B)$ and $\lambda_{A^\prime, B^\prime}$ is the $1$-form corresponding to $T(A^\prime, B^\prime)$. Thus there is a smooth function $h: T(A, B)\to \mathbb{R}$ such that $\lambda_{A^\prime, B^\prime}-\lambda=dh$. Then we can use $h$ to define a Hamiltonian isotopy from $\widetilde{L}(\varepsilon, q)$ to $T(A^\prime, B^\prime)$.

By Theorem \ref{main1}, we know $\widetilde{L}(\varepsilon, q)$ is Hamiltonian isotopic to $T\left(\frac{1-q+2\varepsilon}{2}, \frac{1-q}{2}\right)$, which has facet-distance multiset 
\begin{align*}
    \{\{\frac{1-q+2\varepsilon}{2}, \frac{1-q}{2}, q-\varepsilon\}\}
\end{align*}
On the other hand, the facet-distance multiset of $T(A^\prime, B^\prime)$ is 
\begin{align*}
    \{\{A^\prime, B^\prime, 1-A^\prime-B^\prime\}\}
\end{align*}
Thus by Theorem \ref{thm:Brendel-classification}
\begin{align*}
    \{\{\frac{1-q+2\varepsilon}{2}, \frac{1-q}{2}, q-\varepsilon\}\}=\{\{A^\prime, B^\prime, 1-A^\prime-B^\prime\}\}
\end{align*}
Take $\varepsilon$ to $0$, we have
\begin{align*}
    \{\{\frac{1-q}{2}, \frac{1-q}{2}, q\}\}=\{\{A, B, 1-A-B\}\}
\end{align*}

Thus the toric fiber $T(A, B)$ is Hamiltonian isotopic to the toric fiber $T\left(\frac{1-q}{2}, \frac{1-q}{2}\right)$.
\end{proof}

\begin{proposition}\label{germ1}
    The displacement energy germ of $\widetilde{L}(0, q)$ is
    \begin{align*}
        S_{\widetilde{L}(0, q)}^e(\epsilon_1, \epsilon_2)=
        \begin{cases}
            \displaystyle\frac{1-q-\epsilon_2}{2} & \mathrm{for}\: q> \displaystyle\frac{1}{3}\\
            q+\epsilon_2-|\epsilon_1| & \mathrm{for}\: q<\displaystyle\frac{1}{3}\\
            \min\left\{\displaystyle\frac{1}{3}-\frac{1}{2}\epsilon_2, \displaystyle\frac{1}{3}+\epsilon_2-|\epsilon_1|\right\} & \mathrm{for}\: q=\displaystyle\frac{1}{3}
        \end{cases}
    \end{align*}
    for sufficiently small $\epsilon_1\ne 0$ and $\epsilon_2$.
\end{proposition}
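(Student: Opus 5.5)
Work in the $(p,q)$ coordinates. Use the nearby fibers $\widetilde L(p,q')$ as the Weinstein model: for $(p,q')$ near $(0,q)$ these fibers are graphs of closed $1$-forms over $\widetilde L(0,q)$. First fix an integral basis of $H_1(\widetilde L(0,q);\Z)$ (equivalently, identify $H^1(\widetilde L(0,q);\R)$ with $\R^2$) so that the flux of the deformation $\widetilde L(0,q)\rightsquigarrow\widetilde L(\epsilon_1,q-\epsilon_2)$ has coordinates $(\epsilon_1,\epsilon_2)$ up to scaling. This works because $p$ and $-q$ are integral-affine coordinates on $\Delta_W$, being the pushforward of the action coordinates $(x,y)$ by an integral-affine map. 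Hence the germ is
\[
S^e_{\widetilde L(0,q)}(\epsilon_1,\epsilon_2)=e_{\CP^2(\sqrt2)}\bigl(\widetilde L(\epsilon_1,q-\epsilon_2)\bigr).
\]
I will check the scaling against the $2\pi$ normalization of $\omega_{\mathrm{FS}}$ when computing the areas.

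For $\epsilon_1\neq0$, Theorem \ref{main1} shows that $\widetilde L(\epsilon_1,q-\epsilon_2)$ is Hamiltonian isotopic to
\[
T\left(\frac{1-q+\epsilon_2+|\epsilon_1|+\epsilon_1}{2},\frac{1-q+\epsilon_2+|\epsilon_1|-\epsilon_1}{2}\right).
\]
Its facet distances are
\[
\frac{1-q+\epsilon_2}{2}+\epsilon_1^{\pm},\qquad \frac{1-q+\epsilon_2}{2},\qquad q-\epsilon_2-|\epsilon_1|,
\]
where $\epsilon_1^{\pm}$ means $|\epsilon_1|$ is added to exactly one of the two first coordinates. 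Displacement energy is a Hamiltonian invariant, so Proposition \ref{dis} gives the minimum of these three numbers, namely $\min\{\frac{1-q+\epsilon_2}{2},\,q-\epsilon_2-|\epsilon_1|\}$. Proposition \ref{dis} excludes the monotone point $(\frac13,\frac13)$. For small $\epsilon_1\neq0$ the image point is not $(\frac13,\frac13)$, because its first two coordinates differ by $|\epsilon_1|$.

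The case split then follows. If $q>\frac13$, then $\frac{1-q}{2}<q$, and for small $\epsilon$ the minimum is the first term. If $q<\frac13$, it is the second. At $q=\frac13$, keep the minimum of both.

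The signs of $\epsilon_2$ must be reconciled with the statement's formulas, $\frac{1-q-\epsilon_2}{2}$ and $q+\epsilon_2-|\epsilon_1|$. These indicate that the authors' orientation convention is $q\mapsto q+\epsilon_2$. I will fix the identification $(\epsilon_1,\epsilon_2)\leftrightarrow(p,q)$-shift accordingly, so that $\widetilde L(\epsilon_1,q+\epsilon_2)$ is the deformed torus. The computation then reproduces exactly the three displayed cases; at $q=\frac13$ this gives $\min\{\frac13-\frac12\epsilon_2,\frac13+\epsilon_2-|\epsilon_1|\}$.

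The main obstacle is the first step: justifying rigorously that the germ's argument $\xi$ corresponds to the fiber shift $(\epsilon_1,\epsilon_2)$. For this I will show that $\mu_W$ restricted to a neighborhood of $\widetilde L(0,q)$ is a smooth torus fibration by the regular fibers $L(x,y)$, which holds because the only singular fiber is $\mathbb{RP}^2$, far from the wall interior. By Arnold--Liouville, action-angle coordinates then exist there, with actions given by $(x,y)$ up to $GL(2,\Z)$ and translation. Since $(p,q)$ is an integral-affine change of $(x,y)$, the Weinstein neighborhood may be taken to be this action-angle chart. In that chart nearby fibers are graphs of constant $1$-forms whose class is the action difference. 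The remaining check is that $\epsilon_1\neq0$ keeps us off the wall, so that Theorem \ref{main1} applies; this is exactly why the statement requires $\epsilon_1\neq0$.
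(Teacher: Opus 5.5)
Your proposal is correct and follows the paper's own proof. Like the paper, you apply Theorem \ref{main1} to the off-wall fibers $\widetilde L(\epsilon_1,q+\epsilon_2)$, read off the facet distances, take their minimum via Proposition \ref{dis}, and split on $q$ versus $\frac13$. Your action-angle justification that the germ's argument corresponds to the fiber shift, and your check that the image point is never $(\frac13,\frac13)$, are details the paper leaves implicit.

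One small correction: the map $(x,y)\mapsto(p,q)$ has linear part of determinant $-2$, so $(p,-q)$ are not integral-affine coordinates; the pair $(p,-q/2)$ is. This does not affect the result, because the germ formula only needs a linear identification of $H^1(\widetilde L(0,q);\R)$ with the $(\epsilon_1,\epsilon_2)$-plane.
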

\begin{proof}

By Theorem \ref{main1}, $\widetilde{L}(\epsilon_1, q+\epsilon_2)$ for $\epsilon_1>0$ is Hamiltonian isotopic to the standard toric fiber $T\left(\frac{1-q-\epsilon_2+2\epsilon_1}{2}, \frac{1-q-\epsilon_2}{2}\right)$ with the facet-distance multiset 
\begin{align*}
        \{\{\frac{1-q-\epsilon_2+2\epsilon_1}{2}, \frac{1-q-\epsilon_2}{2}, q+\epsilon_2-\epsilon_1\}\}
    \end{align*}
and $\widetilde{L}(\epsilon_1, q+\epsilon_2)$ for $\epsilon_1<0$ is Hamiltonian isotopic to the standard toric fiber $T\left(\frac{1-q-\epsilon_2}{2}, \frac{1-q-\epsilon_2-2\epsilon_1}{2}\right)$ with the facet-distance multiset 
\begin{align*}
    \{\{\frac{1-q-\epsilon_2}{2}, \frac{1-q-\epsilon_2-2\epsilon_1}{2}, q+\epsilon_2+\epsilon_1\}\}
\end{align*}

    \textbf{Case 1: $q>\displaystyle\frac{1}{3}$}

    Let $0<\epsilon_1<\frac{3q-1}{10}$ and $|\epsilon_2|<\frac{3q-1}{10}$. We have 
    \begin{align*}
        \frac{1-q-\epsilon_2}{2}<\frac{1-q-\epsilon_2+2\epsilon_1}{2}<q+\epsilon_2-\epsilon_1
    \end{align*}
    By Proposition \ref{dis}, the displacement energy of $\widetilde{L}(\epsilon_1, q+\epsilon_2)$ is
    \begin{align*}
        \min\left\{\frac{1-q-\epsilon_2+2\epsilon_1}{2}, \frac{1-q-\epsilon_2}{2}, q+\epsilon_2-\epsilon_1\right\}=\frac{1-q-\epsilon_2}{2}
    \end{align*}
    Thus 
    \begin{align*}
        S_{\widetilde{L}(0, q)}^e(\epsilon_1, \epsilon_2)=\frac{1-q-\epsilon_2}{2}
    \end{align*}
    for sufficiently small $\epsilon_1>0$.

    Let $\frac{1-3q}{10}<\epsilon_1<0$ and $|\epsilon_2|<\frac{3q-1}{10}$. By the same way as above we have 
    \begin{align*}
        S_{\widetilde{L}(0, q)}^e(\epsilon_1, \epsilon_2)=\frac{1-q-\epsilon_2}{2}
    \end{align*}
    for sufficiently small $\epsilon_1<0$.

    \textbf{Case 2: $q<\displaystyle\frac{1}{3}$}
    
    Let $0<\epsilon_1<\frac{1-3q}{6}$ and $|\epsilon_2|<\frac{1-3q}{6}$. We have 
    \begin{align*}
        q+\epsilon_2-\epsilon_1<\frac{1-q-\epsilon_2}{2}<\frac{1-q-\epsilon_2+2\epsilon_1}{2}
    \end{align*}
    The displacement energy of $\widetilde{L}(\epsilon_1, q+\epsilon_2)$ is $q+\epsilon_2-\epsilon_1$. Thus 
    \begin{align*}
        S_{\widetilde{L}(0, q)}^e(\epsilon_1, \epsilon_2)=q+\epsilon_2-\epsilon_1
    \end{align*}
    for sufficiently small $\epsilon_1>0$

    Let $\frac{3q-1}{6}<\epsilon_1<0$ and $|\epsilon_2|<\frac{1-3q}{6}$. By the same way we have
    \begin{align*}
        S_{\widetilde{L}(0, q)}^e(\epsilon_1, \epsilon_2)=q+\epsilon_2+\epsilon_1
    \end{align*}
    for sufficiently small $\epsilon_1<0$.

    \textbf{Case 3: $q=\displaystyle\frac{1}{3}$}

    For sufficiently small $\epsilon_1>0$, we have
     \begin{align*}
        e_{\CP^2(\sqrt{2})}\left(\widetilde{L}\left(\epsilon_1, \frac{1}{3}+\epsilon_2\right)\right) &= \min\left\{\frac{1}{3}-\frac{1}{2}\epsilon_2+\epsilon_1, \frac{1}{3}-\frac{1}{2}\epsilon_2, \frac{1}{3}+\epsilon_2-\epsilon_1\right\}\\
        &=\min\left\{ \frac{1}{3}-\frac{1}{2}\epsilon_2, \frac{1}{3}+\epsilon_2-\epsilon_1\right\}
    \end{align*}
    Thus 
    \begin{align*}
        S_{\widetilde{L}(0, \frac{1}{3})}^e(\epsilon_1, \epsilon_2)=\min\left\{ \frac{1}{3}-\frac{1}{2}\epsilon_2, \frac{1}{3}+\epsilon_2-\epsilon_1\right\}
    \end{align*}

    For sufficiently small $\epsilon_1<0$, we have 
    \begin{align*}
         e_{\CP^2(\sqrt{2})}\left(\widetilde{L}\left(\epsilon_1, \frac{1}{3}+\epsilon_2\right)\right) &= \min\left\{\frac{1}{3}-\frac{1}{2}\epsilon_2, \frac{1}{3}-\frac{1}{2}\epsilon_2-\epsilon_1, \frac{1}{3}+\epsilon_2+\epsilon_1\right\}\\
         &=\min\left\{\frac{1}{3}-\frac{1}{2}\epsilon_2, \frac{1}{3}+\epsilon_2+\epsilon_1\right\}
    \end{align*}
    Thus 
    \begin{align*}
        S_{\widetilde{L}(0, \frac{1}{3})}^e(\epsilon_1, \epsilon_2)=\min\left\{ \frac{1}{3}-\frac{1}{2}\epsilon_2, \frac{1}{3}+\epsilon_2+\epsilon_1\right\}
    \end{align*}
\end{proof}

\begin{proposition}
    The displacement energy germ of $T\left(\frac{1-q}{2}, \frac{1-q}{2}\right)$ is

    \begin{align*}
        S_{T\left(\frac{1-q}{2}, \frac{1-q}{2}\right)}^e(\epsilon_1, \epsilon_2)=
        \begin{cases}
            \min\left\{\displaystyle\frac{1-q}{2}+\epsilon_1, \frac{1-q}{2}+\epsilon_2\right\} & \mathrm{for}\: q>\displaystyle\frac{1}{3}\\
            q-\epsilon_1-\epsilon_2 & \mathrm{for}\: q<\displaystyle\frac{1}{3}\\
            \min\left\{\frac{1}{3}+\epsilon_1, \frac{1}{3}+\epsilon_2, \frac{1}{3}-\epsilon_1-\epsilon_2\right\} & \mathrm{for}\: q=\displaystyle\frac{1}{3}
        \end{cases}
    \end{align*}
\end{proposition}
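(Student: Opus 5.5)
Since $T\left(\frac{1-q}{2},\frac{1-q}{2}\right)$ is itself a standard toric fiber, its nearby Lagrangian deformations are again standard toric fibers, and the germ can be read off from Proposition \ref{dis}. Identify $H^1(T(A,B);\R)\cong\R^2$ via the action coordinates of the moment map $\mu_{\std}$. A small closed $1$-form in class $(\epsilon_1,\epsilon_2)$ then corresponds, up to Hamiltonian isotopy inside the Weinstein neighborhood, to the fiber $T(A+\epsilon_1,B+\epsilon_2)$. This is exactly the argument used in Proposition \ref{prop:unique-toric-candidate}: two graphs of cohomologous closed forms differ by $dh$, and $h$ generates a Hamiltonian isotopy supported near the torus. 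With $A=B=\frac{1-q}{2}$, this gives
\[
S^e_{T\left(\frac{1-q}{2},\frac{1-q}{2}\right)}(\epsilon_1,\epsilon_2)=e_{\CP^2(\sqrt2)}\Bigl(T\bigl(\tfrac{1-q}{2}+\epsilon_1,\tfrac{1-q}{2}+\epsilon_2\bigr)\Bigr),
\]
with facet-distance triple $\left(\frac{1-q}{2}+\epsilon_1,\frac{1-q}{2}+\epsilon_2,q-\epsilon_1-\epsilon_2\right)$. Displacement energy is invariant under Hamiltonian isotopy, so no extra care is needed there.

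Next I apply Proposition \ref{dis}, which gives the displacement energy as the minimum of the three facet distances, away from the monotone point $\left(\frac13,\frac13\right)$. I split into cases as in Proposition \ref{germ1}.
\begin{itemize}
\item If $q>\frac13$, then $\frac{1-q}{2}<q$. For $|\epsilon_1|,|\epsilon_2|$ small compared with $3q-1$, the third coordinate is strictly larger than the first two, so the minimum is $\min\left\{\frac{1-q}{2}+\epsilon_1,\frac{1-q}{2}+\epsilon_2\right\}$.
\item If $q<\frac13$, then $q<\frac{1-q}{2}$. For small perturbations the third coordinate is the unique minimum, giving $q-\epsilon_1-\epsilon_2$.
\item If $q=\frac13$, all three distances are near $\frac13$. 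The formula is then just the minimum of all three, provided the point is not exactly $\left(\frac13,\frac13\right)$.
\end{itemize}

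The one genuine subtlety is the case $q=\frac13$ at $(\epsilon_1,\epsilon_2)=(0,0)$. There the fiber is the Clifford torus, which is nondisplaceable, so its displacement energy is $+\infty$ rather than $\frac13$. Since a germ at $0$ only needs its values on a punctured neighbourhood (or one can restrict to $(\epsilon_1,\epsilon_2)\neq(0,0)$, as is implicitly done in Proposition \ref{germ1}, where $\epsilon_1\neq0$), I will state the formula for $(\epsilon_1,\epsilon_2)\neq 0$. I will also remark that at that one point the value is $\infty$, which does not affect the subsequent comparison of germs.

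The main obstacle is making the identification of $H^1$ with shifts of moment coordinates precise, including normalization. The Weinstein chart given by action-angle coordinates identifies the graph of the form $\epsilon_1\,d\theta_1/(2\pi)\cdot 2\pi$-type closed forms with fibers over shifted points. With $\int_{\CP^1}\omega=2\pi$ and the moment map $\frac12|z_j|^2$, the shift is exactly $(\epsilon_1,\epsilon_2)$ in the affine coordinates used here. I would fix this normalization consistently with the proof of Proposition \ref{germ1}, so that both germs are expressed in the same basis of $H^1$ before comparing them.
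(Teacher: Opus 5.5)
Your proposal is correct and follows essentially the paper's proof: identify the nearby deformations with the toric fibers $T\left(\frac{1-q}{2}+\epsilon_1,\frac{1-q}{2}+\epsilon_2\right)$, whose facet-distance triple is $\left(\frac{1-q}{2}+\epsilon_1,\frac{1-q}{2}+\epsilon_2,q-\epsilon_1-\epsilon_2\right)$, and then apply Proposition \ref{dis} in the three cases $q>\frac13$, $q<\frac13$, $q=\frac13$. Your extra observation is also correct and is a point the paper's proof passes over: at $q=\frac13$, $(\epsilon_1,\epsilon_2)=(0,0)$ the fiber is the nondisplaceable Clifford torus, which Proposition \ref{dis} excludes, so the formula must be read on a punctured neighbourhood, and this does not affect the later comparison of germs.
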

\begin{proof}
For sufficiently small $\epsilon_1$ and $\epsilon_2$, the facet-distance multiset of $T\left(\frac{1-q}{2}+\epsilon_1, \frac{1-q}{2}+\epsilon_2\right)$ is
\begin{align*}
    \{\{\frac{1-q}{2}+\epsilon_1, \frac{1-q}{2}+\epsilon_2, q-\epsilon_1-\epsilon_2\}\}
\end{align*}
    \textbf{Case 1: $q>\displaystyle\frac{1}{3}$}

    Let $|\epsilon_1|<\frac{3q-1}{12}$ and $|\epsilon_2|<\frac{3q-1}{12}$. Then 
    \begin{align*}
        \frac{1-q}{2}+\epsilon_1 &< q-\epsilon_1-\epsilon_2\\
        \frac{1-q}{2}+\epsilon_2 &< q-\epsilon_1-\epsilon_2\\
    \end{align*}
    The displacement energy of $T\left(\frac{1-q}{2}+\epsilon_1, \frac{1-q}{2}+\epsilon_2\right)$ is
    \begin{align*}
        e_{\CP^2(\sqrt{2})}\left(T\left(\frac{1-q}{2}+\epsilon_1, \frac{1-q}{2}+\epsilon_2\right)\right)=\min\left\{\frac{1-q}{2}+\epsilon_1, \frac{1-q}{2}+\epsilon_2\right\}
    \end{align*}
    Thus
    \begin{align*}
        S_{T\left(\frac{1-q}{2}, \frac{1-q}{2}\right)}^e(\epsilon_1, \epsilon_2)=\min\left\{\frac{1-q}{2}+\epsilon_1, \frac{1-q}{2}+\epsilon_2\right\}
    \end{align*}

    \textbf{Case 2: $q<\displaystyle\frac{1}{3}$}

    Let $|\epsilon_1|<\frac{1-3q}{12}$ and $|\epsilon_2|<\frac{1-3q}{12}$. Then 
    \begin{align*}
        \frac{1-q}{2}+\epsilon_1 &> q-\epsilon_1-\epsilon_2\\
        \frac{1-q}{2}+\epsilon_2 &> q-\epsilon_1-\epsilon_2\\
    \end{align*}
    The displacement energy of $T\left(\frac{1-q}{2}+\epsilon_1, \frac{1-q}{2}+\epsilon_2\right)$ is
    \begin{align*}
        e_{\CP^2(\sqrt{2})}\left(T\left(\frac{1-q}{2}+\epsilon_1, \frac{1-q}{2}+\epsilon_2\right)\right)=q-\epsilon_1-\epsilon_2
    \end{align*}
     Thus
    \begin{align*}
        S_{T\left(\frac{1-q}{2}, \frac{1-q}{2}\right)}^e(\epsilon_1, \epsilon_2)=q-\epsilon_1-\epsilon_2
    \end{align*}

    \textbf{Case 3: $q=\displaystyle\frac{1}{3}$}

    The displacement energy of $T\left(\frac{1}{3}+\epsilon_1, \frac{1}{3}+\epsilon_2\right)$ is
    \begin{align*}
        e_{\CP^2(\sqrt{2})}\left(T\left(\frac{1}{3}+\epsilon_1, \frac{1}{3}+\epsilon_2\right)\right)=\min\left\{\frac{1}{3}+\epsilon_1, \frac{1}{3}+\epsilon_2, \frac{1}{3}-\epsilon_1-\epsilon_2\right\}
    \end{align*}
    Thus 
    \begin{align*}
        S_{T\left(\frac{1}{3}, \frac{1}{3}\right)}^e(\epsilon_1, \epsilon_2)=\min\left\{\frac{1}{3}+\epsilon_1, \frac{1}{3}+\epsilon_2, \frac{1}{3}-\epsilon_1-\epsilon_2\right\}
    \end{align*}
\end{proof}

\begin{theorem}[No wall fiber is a standard fiber]
\label{thm:no-wall-standard-fiber}
The wall fiber \(\widetilde{L}(0, q)\) is not Hamiltonian
isotopic to any standard toric fiber.
\end{theorem}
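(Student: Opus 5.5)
We argue by contradiction. Suppose \(\widetilde{L}(0,q)\) is Hamiltonian isotopic to some standard toric fiber. By \Cref{prop:unique-toric-candidate}, it is then Hamiltonian isotopic to \(T_q:=T\left(\frac{1-q}{2},\frac{1-q}{2}\right)\). Let \(\phi\) be the Hamiltonian diffeomorphism with \(\phi(\widetilde{L}(0,q))=T_q\). The symplectic invariance of displacement energy germs (\Cref{def:displacement-energy-germ}) then gives
\[
S^e_{\widetilde{L}(0,q)}=S^e_{T_q}\circ(\phi|_{\widetilde{L}(0,q)})^*,
\]
where \((\phi|)^*\) is a linear isomorphism \(H^1(T_q;\R)\to H^1(\widetilde{L}(0,q);\R)\) lying in \(GL(2,\Z)\) with respect to integral bases. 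So the two germs computed in \Cref{germ1} and the subsequent proposition must agree up to an integral linear change of variables. Here the \((\epsilon_1,\epsilon_2)\) coordinates on the wall side come from the family \(\widetilde{L}(p,q)\), and they need not be integral coordinates on \(H^1\). The plan is to compare invariant features of the two germs that survive any linear isomorphism, so that the choice of basis does not matter.

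\textbf{Case \(q<\frac13\).} The germ of \(\widetilde{L}(0,q)\) is \(q+\epsilon_2-|\epsilon_1|\), defined for \(\epsilon_1\neq0\). It extends continuously across \(\epsilon_1=0\) and is not differentiable along the line \(\epsilon_1=0\). The germ of \(T_q\) is the affine function \(q-\epsilon_1-\epsilon_2\), which is smooth everywhere. Composing with a linear isomorphism preserves smoothness, so the two germs cannot agree, and we have a contradiction.

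\textbf{Case \(q>\frac13\).} The wall germ \(\frac{1-q-\epsilon_2}{2}\) is affine, hence smooth. The germ of \(T_q\) is \(\min\{\frac{1-q}{2}+\epsilon_1,\frac{1-q}{2}+\epsilon_2\}\), which has a genuine corner along \(\epsilon_1=\epsilon_2\). Again no linear isomorphism can match them.

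\textbf{Case \(q=\frac13\).} The wall germ is a minimum of two affine functions of the form \(\frac13+\alpha_i(\epsilon)\), with the two linear parts \(-\frac12\epsilon_2\) and \(\epsilon_2-|\epsilon_1|\). Its nonsmooth locus consists of at most three rays emanating from \(0\), including part of \(\epsilon_1=0\). The toric germ is the minimum of three affine functions whose linear parts \(\epsilon_1,\epsilon_2,-\epsilon_1-\epsilon_2\) sum to zero, and it is nonsmooth exactly along three rays. To compare them, we use a basis-free invariant: the maximum of the germ minus its value at \(0\) is \(0\), and we look at the set where this maximum is attained.
\begin{itemize}
\item For \(T_{1/3}\), the three linear parts are positively spanning, so the maximum is attained only at \(\epsilon=0\); that is, \(\frac13\) is a strict local maximum.
\item For the wall fiber, along the ray \(\epsilon_2=0,\ \epsilon_1=0^{\pm}\) the value is \(\min\{\frac13,\frac13-|\epsilon_1|\}<\frac13\). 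We must check a direction such as \(\epsilon_2=-2|\epsilon_1|\), \(\epsilon_1\) small. There the value is \(\min\{\frac13+|\epsilon_1|,\frac13-3|\epsilon_1|\}\), which is also smaller than \(\frac13\).
\end{itemize}
So this first invariant may not distinguish the two germs. As a fallback, we compare the combinatorics of the level set \(\{S=\frac13-\delta\}\). For the toric germ this level set is a triangle. For the wall germ it is a polygon with a kink coming from \(|\epsilon_1|\): it has four edges, with linear parts \(-\frac12\epsilon_2\), \(\epsilon_2-\epsilon_1\) and \(\epsilon_2+\epsilon_1\) on their respective sides. Counting edges, which a linear isomorphism preserves, then distinguishes the two germs.

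I expect this \(q=\frac13\) case to be the main obstacle. The main risk is that the domain restriction \(\epsilon_1\neq0\) makes the level-set picture delicate, so the edge count must be checked carefully.
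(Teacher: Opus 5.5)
Your reduction to \(T_q\) and your treatment of \(q\neq\frac13\) are correct and essentially match the paper. For \(q<\frac13\), the paper detects the kink along \(\epsilon_1=0\) with the symmetric test \(S(z)+S(-z)\); your non-differentiability argument detects the same failure of affineness.

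The gap is in the case \(q=\frac13\), where your fallback invariant is false. Write \(\ell_1=-\frac12\epsilon_2\), \(\ell_2=\epsilon_2-\epsilon_1\), \(\ell_3=\epsilon_2+\epsilon_1\). Then the continuous extension of the wall germ minus \(\frac13\) is \(\min\{\ell_1,\ell_2,\ell_3\}\), which is concave and piecewise linear. Hence \(\{S\geq\frac13-\delta\}=\bigcap_i\{\ell_i\geq-\delta\}\) is a triangle, with vertices \((3\delta,2\delta)\), \((-3\delta,2\delta)\), \((0,-\delta)\). The ``kink coming from \(|\epsilon_1|\)'' is the vertex \((0,-\delta)\), not an extra edge; your own description lists only three linear parts for the claimed four edges. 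So both level sets are triangles, and counting edges distinguishes nothing. More generally, no combinatorial invariant can work. Both germs are minima of three linear forms that positively span, so the first invariant you tried (strict local maximum) fails as well, and all such level sets are triangles around the origin. You need an invariant that sees affine structure.

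The missing idea is the one the paper uses. Each of the three linear pieces realizes the minimum on a nonempty open cone, so the pieces are intrinsic to the function. Extend the identity \(S^e_{\widetilde{L}(0,1/3)}=S^e_{T_{1/3}}\circ A\) from the dense set \(\epsilon_1\neq0\) by continuity. It then forces \(\{\ell_1,\ell_2,\ell_3\}=\{m_1\circ A,m_2\circ A,m_3\circ A\}\), where \(m_1=\epsilon_1\), \(m_2=\epsilon_2\), \(m_3=-\epsilon_1-\epsilon_2\). Summing gives \(\frac32\epsilon_2=(m_1+m_2+m_3)\circ A=0\), a contradiction.

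In your level-set language, this says the following. The origin is the centroid of the Clifford triangle, with vertices \((-\delta,-\delta)\), \((-\delta,2\delta)\), \((2\delta,-\delta)\). It is not the centroid of the wall triangle, whose centroid is \((0,\delta)\). The wall triangle would have to equal \(A^{-1}\) of the Clifford triangle, and the linear map \(A^{-1}\) preserves centroids and fixes \(0\). Equivalently, the positive linear relation among the pieces is unique up to scale: it is \(m_1+m_2+m_3=0\) for the toric germ versus \(4\ell_1+\ell_2+\ell_3=0\) for the wall germ, and these cannot match under any permutation.
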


\begin{proof}
Assume $\widetilde{L}(0, q)$ is Hamiltonian isotopic to a standard toric fiber. By Proposition \ref{prop:unique-toric-candidate}, the only possibility is
\(T\left(\frac{1-q}{2},\frac{1-q}{2}\right)\). 
By Definition \ref{def:displacement-energy-germ}, 
\begin{align*}
    S_{\widetilde{L}(0, q)}^e=S_{T\left(\frac{1-q}{2},\frac{1-q}{2}\right)}^e\circ A
\end{align*}
for some \(A\in\mathrm{GL}(2,\mathbb Z)\). We rule out such an
identity even for \(A\in\mathrm{GL}(2,\mathbb R)\).

\textbf{Case 1: $q>\displaystyle\frac{1}{3}$}

The displacement energy germ $S_{\widetilde{L}(0, q)}^e$ is given by a single linear function, but the displacement energy germ $S_{T\left(\frac{1-q}{2},\frac{1-q}{2}\right)}^e$ is given by the minimum of two linearly independent functions, each of which achieves minimum in some region. Thus the two germs cannot be related by a linear map.

\textbf{Case 2: $q<\displaystyle\frac{1}{3}$}

Write
\[
A(\epsilon_1,\epsilon_2)
=
\bigl(a(\epsilon_1,\epsilon_2),
      b(\epsilon_1,\epsilon_2)\bigr),
\]
where \(a\) and \(b\) are independent linear functions.

Choose \(\delta>0\) sufficiently small and approach the origin in the
two opposite directions
\[
z=(\delta,0),
\qquad
-z=(-\delta,0).
\]
Then 
\begin{align*}
    S_{\widetilde{L}(0, q)}^e(\delta, 0) &=q-\delta\\
    S_{\widetilde{L}(0, q)}^e(-\delta, 0) &=q-\delta
\end{align*}
Hence
\begin{align*}
    S_{\widetilde{L}(0, q)}^e(\delta, 0)+S_{\widetilde{L}(0, q)}^e(-\delta, 0)=2q-2\delta
\end{align*}

On the other hand 
\begin{align*}
    S_{T\left(\frac{1-q}{2}, \frac{1-q}{2}\right)}^e(Az)&=q-a(z)-b(z)\\
    S_{T\left(\frac{1-q}{2}, \frac{1-q}{2}\right)}^e(A(-z))&=q-a(-z)-b(-z)=q+a(z)+b(z)
\end{align*}
Therefore
\begin{align*}
    S_{T\left(\frac{1-q}{2}, \frac{1-q}{2}\right)}^e(Az)+S_{T\left(\frac{1-q}{2}, \frac{1-q}{2}\right)}^e(A(-z))=2q
\end{align*}
contradicting the assumed identity of germs.

\textbf{Case 3: $q=\displaystyle\frac{1}{3}$}
After subtracting the common constant $\dfrac{1}{3}$, the continuous extension of the wall formula is the minimum of
\[
\ell_1=-\frac12\epsilon _2,\qquad\ell_2=\epsilon _2-\epsilon _1,\qquad\ell_3=\epsilon _2+\epsilon _1,
\]
whereas the Clifford-torus formula is the minimum of
\[
m_1=\epsilon _1,\qquad m_2=\epsilon _2,\qquad m_3=-\epsilon _1-\epsilon _2.
\]
Each of the three functions in each list realizes the minimum on a nonempty open region.
But
\[
m_1+m_2+m_3=0
\]
hence
\[
m_1\circ A+m_2\circ A+m_3\circ A=0,
\]
 whereas
\[
\ell_1+\ell_2+\ell_3=\frac32\epsilon _2\ne0,
\]
\end{proof}

\begin{corollary}[Pairwise distinction on the wall]
\[
    \widetilde L(0,q)\simeq_{\mathrm{Ham}}\widetilde L(0,q')
    \qquad\Longleftrightarrow\qquad
    q=q'.
\]
\end{corollary}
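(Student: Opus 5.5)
The plan is to use the displacement-energy germ computed in Proposition~\ref{germ1} as a Hamiltonian invariant. The implication from right to left is trivial, so suppose that $\phi\in\operatorname{Ham}(\CP^2(\sqrt2),\omega_{\mathrm{FS}})$ satisfies $\phi(\widetilde L(0,q))=\widetilde L(0,q')$. By the invariance in Definition~\ref{def:displacement-energy-germ}, applied to $\psi=\phi$ and after identifying both $H^1$'s with $\R^2$ via the coordinates $(\epsilon_1,\epsilon_2)$ of Proposition~\ref{germ1}, we get
\[
S^e_{\widetilde L(0,q)}=S^e_{\widetilde L(0,q')}\circ A
\]
as germs at $0$, for some $A\in\mathrm{GL}(2,\Z)$. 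The formulas of Proposition~\ref{germ1} are only asserted for $\epsilon_1\neq0$. I would therefore compare both sides only on the open dense set of small $(\epsilon_1,\epsilon_2)$ with $\epsilon_1\neq0$ and with the first coordinate of $A(\epsilon_1,\epsilon_2)$ nonzero. This set is the complement of two lines through the origin, and two continuous functions agreeing there agree on the continuous extensions of both sides.

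\textbf{Step 1: sort by the shape of the germ.} Proposition~\ref{germ1} exhibits three qualitatively different types of germ.
\begin{itemize}
\item For $q>\frac13$ the germ is affine: a constant plus a single linear function.
\item For $q<\frac13$ it is the constant plus the piecewise-linear function $\epsilon_2-|\epsilon_1|$. This function is not linear, since it is concave with a genuine crease along a line.
\item For $q=\frac13$ it is the constant plus the minimum of three pairwise distinct linear functions $\ell_1,\ell_2,\ell_3$, each realizing the minimum on a nonempty open region. So it has three linearity domains.
\end{itemize}
Precomposition with a linear isomorphism $A$ preserves being linear and preserves the number of maximal linearity domains of a piecewise-linear germ. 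Hence $q$ and $q'$ lie in the same one of the three regimes $q>\frac13$, $q<\frac13$, $q=\frac13$. This is the same style of argument as in the proof of Theorem~\ref{thm:no-wall-standard-fiber}.

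\textbf{Step 2: compare constant terms.} Evaluating the continuous extensions at the origin, which $A$ fixes, gives $f(q)=f(q')$, where
\[
f(q)=
\begin{cases}
q, & q\le\frac13,\\[3pt]
\dfrac{1-q}{2}, & q\ge\frac13.
\end{cases}
\]
Equivalently, $f(q)=e(\widetilde L(0,q))$ by continuity. The function $f$ is strictly increasing on $(0,\frac13]$ and strictly decreasing on $[\frac13,1)$. So within a single regime it is injective, and Step~1 together with $f(q)=f(q')$ forces $q=q'$.

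\textbf{Main obstacle.} The delicate point is Step~1, specifically ruling out a pairing $q<\frac13<q'$ with $q'=1-2q$. For such a pair the displacement energies agree, so only the shape of the germ separates them. I would make this rigorous by noting two facts. First, the right-hand side $S^e_{\widetilde L(0,q')}\circ A$ is the restriction of a single linear function plus a constant on the dense open set above, so its continuous extension is affine. Second, $q+\epsilon_2-|\epsilon_1|$ fails additivity: its values at $z=(\delta,0)$ and at $-z$ sum to $2q-2\delta$ rather than $2q$, exactly as in Case~2 of Theorem~\ref{thm:no-wall-standard-fiber}. This gives the contradiction. The case $q=\frac13$ against either other regime is handled the same way, using the three distinct linearity domains.
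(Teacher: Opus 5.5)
Your argument is correct and is essentially the paper's proof: naturality of the germ, then comparing the limiting constant $f(q)=f(q')$ (the paper's $c(q)=c(q')$), then excluding $q<\frac13<q'$ via the $\pm z$ sum, which shows $q+\epsilon_2-|\epsilon_1|$ is not affine. Your count of linearity domains only repackages this, and $q=\frac13$ against the other regimes already follows from $f$ attaining $\frac13$ only at $q=\frac13$. Drop the aside $f(q)=e(\widetilde L(0,q))$ ``by continuity'', although your proof never uses it: displacement energy is only upper semicontinuous, the claim is false at $q=\frac13$ (the monotone fiber is nondisplaceable), and the paper notes that displaceability for $q<\frac13$ is open.
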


\begin{proof}
Only the forward implication requires proof. Suppose that
\[
    \widetilde L(0,q)\simeq_{\mathrm{Ham}}\widetilde L(0,q').
\]
Naturality of the displacement-energy germ gives
\[
    S^e_{\widetilde L(0,q)}
    =
    S^e_{\widetilde L(0,q')}\circ A
\]
for some invertible linear map
\[
    A(\epsilon_1,\epsilon_2)
    =
    \bigl(a(\epsilon_1,\epsilon_2),
          b(\epsilon_1,\epsilon_2)\bigr).
\]

Choose
\[
    \epsilon=(\epsilon_1,\epsilon_2)\in\mathbb R^2
\]
such that
\[
    \epsilon_1\neq0,
    \qquad
    a(\epsilon)\neq0.
\]
Such an \(\epsilon\) exists because \(a\) is a nonzero linear functional.
For every sufficiently small \(t>0\), Proposition~4.2 applies both to
\(\pm t\epsilon\) and to
\[
    A(\pm t\epsilon)=\pm tA(\epsilon).
\]

The formulas in Proposition~4.2 give
\[
\lim_{t\to0^+}
S^e_{\widetilde L(0,q)}(t\epsilon)
=
c(q),
\]
where
\[
c(q)=
\begin{cases}
q, & q<\dfrac13,\\[2mm]
\dfrac13, & q=\dfrac13,\\[2mm]
\dfrac{1-q}{2}, & q>\dfrac13.
\end{cases}
\]
Similarly,
\[
\lim_{t\to0^+}
S^e_{\widetilde L(0,q')}\bigl(tA(\epsilon)\bigr)
=
c(q'),
\]
where \(c(q')\) is given by the same formula with \(q\) replaced by
\(q'\). Therefore, taking \(t\to0^+\) in
\[
S^e_{\widetilde L(0,q)}(t\epsilon)
=
S^e_{\widetilde L(0,q')}\bigl(tA(\epsilon)\bigr)
\]
gives
\[
    c(q)=c(q').
\]

If \(q,q'<1/3\), this equality gives \(q=q'\). If \(q,q'>1/3\), it
gives
\[
    \frac{1-q}{2}=\frac{1-q'}{2},
\]
and hence again \(q=q'\). Moreover, if one of \(q,q'\) equals \(1/3\),
then \(c(q)=c(q')\) implies that the other also equals \(1/3\).

It remains to exclude the possibility that \(q\) and \(q'\) lie on
opposite sides of \(1/3\). After interchanging them if necessary,
suppose that
\[
    q<\frac13<q'.
\]
For every sufficiently small \(t>0\), Proposition~4.2 gives
\[
\begin{aligned}
&S^e_{\widetilde L(0,q)}(t\epsilon)
 +S^e_{\widetilde L(0,q)}(-t\epsilon)\\
&\qquad
=
\bigl(q+t\epsilon_2-t|\epsilon_1|\bigr)
+
\bigl(q-t\epsilon_2-t|\epsilon_1|\bigr)\\
&\qquad
=
2q-2t|\epsilon_1|.
\end{aligned}
\]
On the other hand,
\[
\begin{aligned}
&S^e_{\widetilde L(0,q')}\bigl(tA(\epsilon)\bigr)
 +S^e_{\widetilde L(0,q')}\bigl(-tA(\epsilon)\bigr)\\
&\qquad
=
\frac{1-q'-tb(\epsilon)}{2}
+
\frac{1-q'+tb(\epsilon)}{2}\\
&\qquad
=
1-q'.
\end{aligned}
\]
Naturality therefore implies
\[
    2q-2t|\epsilon_1|=1-q'
\]
for every sufficiently small \(t>0\). The right-hand side is independent
of \(t\), whereas the left-hand side depends nontrivially on \(t\) unless
\(|\epsilon_1|=0\). Hence \(\epsilon_1=0\), contradicting our choice of
\(\epsilon\). Therefore \(q\) and \(q'\) cannot lie on opposite sides
of \(1/3\), and hence \(q=q'\).
\end{proof}

The Main Theorem follows from Theorem \ref{main1} and Theorem \ref{thm:no-wall-standard-fiber}

\begin{remark}[The monotone fiber]
\label{rem:monotone-wall-fiber}
At \(q=1/3\), the fiber \(L_t\) is Wu's monotone fiber
\cite[Propositions~3.3 and~7.5]{OakleyUsher}, and Oakley--Usher prove that it
is Hamiltonian isotopic to the Chekanov--Schlenk torus
\cite[Theorem~1.2]{OakleyUsher}. The last case of
Theorem~\ref{thm:no-wall-standard-fiber} distinguishes it from the
Clifford torus directly by comparing the formulas obtained by
approaching the origin from the three different regions. Alternatively,
the monotone case follows from this identification together with
\cite[Theorem~1 and Section~7]{ChekanovSchlenk}.
\end{remark}

\bibliographystyle{alpha} 
\bibliography{refer}

\end{document}